\title{Algebraic Modelling and Performance Evaluation\\ of Acyclic Fork-Join Queueing Networks\thanks
{Advances in Stochastic Simulation Methods / Ed. by N.~Balakrishnan, V.~B.~Melas, S.~Ermakov (Statistics for Industry and Technology), Birkh\"{a}user, Boston, 2000, pp.~63--81.}} 
\author{Nikolai K. Krivulin\thanks{Faculty of Mathematics and Mechanics, St.~Petersburg State University, 28 Universitetsky Ave., St.~Petersburg, 198504, Russia, 
nkk@math.spbu.ru}
}
\date{}
\newtheorem{theorem}{Theorem}
\newtheorem{lemma}[theorem]{Lemma}
\newtheorem{proposition}{Proposition}
\begin{document}

\maketitle

\begin{abstract}
Simple lower and upper bounds on mean cycle time in stochastic acyclic
fork-join queueing networks are derived using a $(\max,+)$-algebra based
representation of network dynamics. The behaviour of the bounds under various
assumptions concerning the service times in the networks is discussed, and
related numerical examples are presented.
\\

\textit{Key-Words:} (max,+)-algebra, dynamic state equation, acyclic fork-join queueing networks, stochastic dynamic systems, mean cycle time.
\end{abstract}

\section{Introduction}

Fork-join networks introduced in \cite{Baccelli1989Queueing,Baccelli1989Acyclic}, present a class of queueing system models which allow customers (jobs,
tasks) to be split into several parts, and to be merged into one when they
circulate through the system. The fork-join formalism proves to be useful in
the description of dynamical processes in a variety of actual complex systems,
including production processes in manufacturing, transmission of messages in
communication networks, and parallel data processing in multi-processor
computer systems. As a natural illustration of the fork and join operations,
one can consider respectively splitting a message into packets in a
communication network, each intended for transmitting via separate ways, and
merging packets at a destination node of the network to restore the message.
Further examples can be found in \cite{Baccelli1989Queueing}.

The usual way to represent the dynamics of fork-join queueing networks relies 
on the implementation of recursive state equations of the Lindley type 
\cite{Baccelli1989Queueing}. Since the recursive equations associated with 
the fork-join networks can be expressed only in terms of the operations of 
maximum and addition, there is a possibility to represent the dynamics of the 
networks in terms of the (max,+)-algebra which is actually an algebraic system 
just supplied with the same two operations \cite{Cuninghame-Green1979Minimax,Baccelli1993Synchronization,Maslov1994Idempotent}. In fact, (max,+)-algebra models 
offer a more compact and unified way of describing network dynamics, and, 
moreover, lead to equations closely analogous to those in the conventional 
linear system theory \cite{Baccelli1993Synchronization,Krivulin1994Using,Krivulin1995Amax-algebra,Krivulin1996Themax-plus,Krivulin1996Max-plus}. In that case, the (max,+)-algebra approach gives one the chance to 
exploit results and numerical procedures available in the algebraic system 
theory and computational linear algebra.

One of the problems of interest in the analysis of stochastic queueing
networks is to evaluate the mean cycle time of a network. Both the mean cycle
time and its inverse which can be regarded as a throughput present performance 
measures commonly used to describe efficiency of the network operation.

It is frequently rather difficult to evaluate the mean cycle time exactly,
even though the network under study is quite simple. To get information about
the performance measure in this case, one can apply computer simulation to
produce reasonable estimates. Another approach is to derive bounds on the mean
cycle time. Specifically, a technique which allows one to establish bounds
based on results of the theory of large deviations as well as the
Perron-Frobenius spectral theory has been introduced in \cite{Baccelli1991Estimates}.

In this paper we propose an approach to get bounds on the mean cycle time,
which exploits the (max,+)-algebra representation of acyclic fork-join
network dynamics derived in \cite{Krivulin1996Themax-plus,Krivulin1996Max-plus}. This approach is
essentially based on pure algebraic manipulations combined with application of
bounds on extreme values, obtained in \cite{Gumbel1954Themaxima,Hartley1954Universal}.

The rest of the paper is organized as follows. Section~\ref{S-PAD} presents
basic (max,+)-algebra definitions and related results which underlie the
development of network models and their analysis in the subsequent sections.
In Section~\ref{S-FAR}, further algebraic results are included which provide
a basis for derivation of bounds on the mean cycle time.

A (max,+)-algebra representation of the fork-join network dynamics and related
examples are given in Section~\ref{S-AMQ}. Furthermore, Section~\ref{S-MPR}
offers some monotonicity property for the networks, which is exploited in
Section~\ref{S-BSC} to get algebraic bounds on the service cycle completion
time. Stochastic extension of the network model is introduced in
Section~\ref{S-SEN}. The section concludes with a result which provides simple
bounds on the network mean cycle time. Finally, Section~\ref{S-DEX} presents
examples of calculating bounds and related discussion.

\section{Preliminary Algebraic Definitions and Results}\label{S-PAD}

The $(\max,+)$-algebra presents an idempotent commutative semiring (idempotent
semifield) which is defined as
$ \mathbb{R}_{\max}
=\langle\underline{\mathbb{R}},\oplus,\otimes\rangle $
with $ \underline{\mathbb{R}}=\mathbb{R} \cup \{\varepsilon\} $,
$ \varepsilon=-\infty $, and binary operations $ \oplus $ and
$ \otimes $ defined as
$$
x \oplus y=\max(x,y), \quad x \otimes y=x + y, \quad
\mbox{for all} \quad x,y\in\underline{\mathbb{R}}.
$$

As it is easy to see, the operations $ \oplus $ and $ \otimes $
retain most of the properties of the ordinary addition and multiplication,
including associativity, commutativity, and distributivity of multiplication
over addition. However, the operation $ \oplus $ is idempotent; that
is, for any $ x \in \underline{\mathbb{R}} $, one has
$ x \oplus x=x $.

There are the null and identity elements in the algebra, namely
$ \varepsilon $ and $ 0 $, to satisfy the conditions
$ x\oplus\varepsilon=\varepsilon\oplus x=x $, and
$ x \otimes 0=0\otimes x=x $, for any
$ x \in \underline{\mathbb{R}} $. The null element $ \varepsilon $ and
the operation $ \otimes $ are related by the usual absorption rule
involving $ x \otimes \varepsilon=\varepsilon \otimes x=\varepsilon $.

Non-negative integer power of any $ x\in\mathbb{R} $ can be defined as
$ x^{0}=0 $, and $ x^{q}=x\otimes x^{q-1}=x^{q-1}\otimes x $ for
$ q\geq1 $. Clearly, the $(\max,+)$-algebra power $ x^{q} $ corresponds
to $ qx $ in ordinary notations. We will use the power notations only in
the $(\max,+)$-algebra sense.


The $(\max,+)$-algebra of matrices is readily introduced in the regular way.
Specifically, for any $(n\times n)$-matrices $ X=(x_{ij}) $ and
$ Y=(y_{ij}) $, the entries of $ U=X \oplus Y $ and
$ V=X \otimes Y $ are calculated as
$$
u_{ij}=x_{ij} \oplus y_{ij}, \quad \mbox{and} \quad
v_{ij}=\bigoplus_{k=1}^{n} x_{ik} \otimes y_{kj}.
$$

As the null and identity elements, the matrices
$$
{\cal E}=\left(\begin{array}{ccc}
           \varepsilon & \ldots & \varepsilon \\
           \vdots      & \ddots & \vdots \\
           \varepsilon & \ldots & \varepsilon
         \end{array}\right), \qquad
       I=\left(\begin{array}{ccc}
          0           &        & \varepsilon \\
                      & \ddots & \\
          \varepsilon &        & 0
         \end{array}\right)
$$
are respectively taken in the algebra.

The matrix operations $ \oplus $ and $ \otimes $ possess monotonicity
properties; that is, the matrix inequalities $ X\leq U $ and
$ Y\leq V $ result in
$$
X\oplus Y\leq U\oplus V, \qquad X\otimes Y\leq U\otimes V
$$
for any matrices of appropriate size.

Let $ X\ne{\cal E} $ be a square matrix. In the same way as in the
conventional algebra, one can define $ X^{0}=I $, and
$ X^{q}=X\otimes X^{q-1}=X^{q-1}\otimes X $ for any integer $ q\geq 1 $.
However, idempotency leads, in particular, to the matrix identity
$$
(X\oplus Y)^{q}=X^{q}\oplus X^{q-1}\otimes Y\oplus\cdots\oplus Y^{q}.
$$
As direct consequences of the above identity, one has
$$
(X\oplus Y)^{q} \geq X^{p}\otimes Y^{q-p}, \qquad
(I\oplus X)^{q} \geq (I\oplus X)^{p} \geq X^{p},
$$
for all $ p=0,1,\ldots,q $.

For any matrix $ X $, its norm is defined as
$$
\|X\|=\bigoplus_{i,j}x_{ij}=\max_{i,j}x_{ij}.
$$
The matrix norm possesses the usual properties. Specifically, for any matrix
$ X $, it holds $ \|X\|\geq\varepsilon $, and
$ \|X\|=\varepsilon $ if and only if $ X={\cal E} $. Furthermore, we
have $ \|c\otimes X\|=c\otimes\|X\| $ for any
$ c\in\underline{\mathbb{R}} $, as well as additive and multiplicative
properties involving
$$
\|X\oplus Y\|=  \|X\|\oplus\|Y\|, \qquad
\|X\otimes Y\| \leq \|X\|\otimes\|Y\|
$$
for any two conforming matrices $ X $ and $ Y $. Note that for any
$ c>0 $, we also have $ \|cX\|=c\|X\| $.


Consider an $(n \times n)$-matrix $ X $ with its entries
$ x_{ij} \in \underline{\mathbb{R}} $. It can be treated as an adjacency
matrix of an oriented graph with $ n $ nodes, provided each entry
$ x_{ij} \neq \varepsilon $ implies the existence of the arc
$ (i,j) $ in the graph, while $ x_{ij}=\varepsilon $ does the lack
of the arc.

It is easy to verify that for any integer $ q\geq 1 $, the matrix
$ X^{q} $ has its the entry $ x^{(q)}_{ij}\ne\varepsilon $ if
and only if there exists a path from node $ i $ to node $ j $ in
the graph, which consists of $ q $ arcs. Furthermore, if the graph
associated with the matrix $ X $ is acyclic, we have
$ X^{q}={\cal E} $ for all $ q>p $, where $ p $ is the
length of the longest path in the graph. Otherwise, provided that the graph is
not acyclic, one can construct a path of any length, lying along circuits, and
then it holds that $ X^{q}\neq{\cal E} $ for all $ q \geq 0 $.

Consider the implicit equation in an unknown vector
$ \mbox{\boldmath $x$}=(x_{1},\ldots,x_{n})^{T} $,
\begin{equation}\label{e-1}
\mbox{\boldmath $x$}=U\otimes\mbox{\boldmath $x$}\oplus\mbox{\boldmath $v$},
\end{equation}
where $ U=(u_{ij}) $ and
$ \mbox{\boldmath $v$}=(v_{1},\ldots,v_{n})^{T} $ are respectively
given $(n\times n)$-matrix and $n$-vector. Suppose that the entries of the
matrix $ U $ and the vector $ \mbox{\boldmath $v$} $ are either
positive or equal to $ \varepsilon $. It is easy to verify (see, e.g.
\cite{Cuninghame-Green1979Minimax,Cohen1985Alinear-system-theoretic} that equation (\ref{e-1}) has the
unique bounded solution if and only if the graph associated with $ U $ is
acyclic. Provided that the solution exists, it is given by
\begin{equation}\label{e-2}
\mbox{\boldmath $x$}=(I\oplus U)^{p}\otimes\mbox{\boldmath $v$},
\end{equation}
where $ p $ is the length of the longest path in the graph.

\section{Further Algebraic Results}\label{S-FAR}

We start with an obvious statement.

\begin{proposition}\label{P-XXG}
For any matrix $ X $, it holds
$$
X \le \|X\|\otimes G,
$$
where $ G $ is the adjacency ($\varepsilon$--0)-matrix of the graph
associated with $ X $.
\end{proposition}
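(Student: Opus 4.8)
The plan is to reduce the matrix inequality to an entry-wise comparison and then dispatch it by examining the two possible types of entries of $X$. By the entry-wise definition of the matrix order, the claim $X \le \|X\| \otimes G$ holds precisely when $x_{ij} \le (\|X\| \otimes G)_{ij}$ for every pair of indices $(i,j)$, so it suffices to identify both sides position by position.

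First I would write out the entries of the right-hand side. Since $\|X\| \otimes G$ is the scalar $\|X\|$ multiplying the matrix $G$, each of its entries equals $\|X\| \otimes g_{ij}$. Recalling that $G$ is the adjacency ($\varepsilon$--$0$)-matrix associated with $X$, its entries are $g_{ij} = 0$ whenever $x_{ij} \neq \varepsilon$ and $g_{ij} = \varepsilon$ whenever $x_{ij} = \varepsilon$. Hence $(\|X\| \otimes G)_{ij}$ equals $\|X\| \otimes 0 = \|X\|$ on exactly those positions where $X$ has a finite entry, and equals $\|X\| \otimes \varepsilon = \varepsilon$ on the remaining positions, the latter by the absorption rule.

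It then remains to check the inequality in each case. Where $x_{ij} = \varepsilon$, both sides equal $\varepsilon$, so the inequality holds as an equality. Where $x_{ij} \neq \varepsilon$, the right-hand side is $\|X\| = \max_{k,l} x_{kl} \ge x_{ij}$, which is immediate from the definition of the norm as the maximal entry. Combining the two cases yields $X \le \|X\| \otimes G$.

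There is essentially no obstacle here beyond careful bookkeeping. The one point demanding attention is the correct treatment of the $\varepsilon$ positions, where one must invoke the absorption property $\|X\| \otimes \varepsilon = \varepsilon$ rather than read $\otimes$ as ordinary addition; once this is settled, the finite positions follow at once from the maximality property of the norm.
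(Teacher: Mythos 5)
Your proof is correct and is precisely the argument the paper has in mind: the paper states this proposition without proof, calling it obvious, and the natural justification is exactly your entry-wise case analysis (absorption by $\varepsilon$ on the empty positions, maximality of the norm on the finite ones). Nothing is missing.
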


\begin{proposition}\label{P-XEE}
Suppose that matrices $ X_{1},\ldots,X_{k} $ have a common associated
acyclic graph, $ p $ is the length of the longest path in the graph, and
$$
X=X_{1}^{m_{1}}\otimes\cdots\otimes X_{k}^{m_{k}},
$$
where $ m_{1},\ldots,m_{k} $ are nonnegative integers.

If it holds that $ m_{1}+\cdots+m_{k}>p $, then $ X={\cal E} $.
\end{proposition}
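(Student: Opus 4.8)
The plan is to reduce the product of the $k$ different matrices to a single power of the common adjacency matrix $G$, and then to invoke the fact that powers of the adjacency matrix of an acyclic graph vanish once the exponent exceeds the longest-path length.

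First I would apply Proposition~\ref{P-XXG} to each factor. Since $X_{1},\ldots,X_{k}$ share a common associated graph, they share its adjacency $(\varepsilon$--$0)$-matrix $G$, so that $X_{i}\le\|X_{i}\|\otimes G$ holds for every $i$. Then, using the monotonicity of $\otimes$ recorded in Section~\ref{S-PAD}, I would propagate these bounds through the product to obtain
$$
X=X_{1}^{m_{1}}\otimes\cdots\otimes X_{k}^{m_{k}}
\le(\|X_{1}\|\otimes G)^{m_{1}}\otimes\cdots\otimes(\|X_{k}\|\otimes G)^{m_{k}}.
$$

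The next step is to separate the scalar norms from the matrix $G$. Because a scalar factor commutes past matrix products (a consequence of associativity and distributivity), each term satisfies $(\|X_{i}\|\otimes G)^{m_{i}}=\|X_{i}\|^{m_{i}}\otimes G^{m_{i}}$, and collecting all scalars together yields
$$
X\le c\otimes G^{m},\qquad
c=\|X_{1}\|^{m_{1}}\otimes\cdots\otimes\|X_{k}\|^{m_{k}},\quad m=m_{1}+\cdots+m_{k},
$$
where I have used the matrix power law $G^{m_{1}}\otimes\cdots\otimes G^{m_{k}}=G^{m}$. Since the common graph is acyclic with longest path $p$ and $m>p$ by hypothesis, the matrix-power/path correspondence from Section~\ref{S-PAD} gives $G^{m}={\cal E}$. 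Hence $X\le c\otimes{\cal E}={\cal E}$, the null element absorbing the scalar, and because ${\cal E}$ is the least matrix we conclude $X={\cal E}$.

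I expect the only delicate point to be the scalar-commutation step: one must check that pulling the norms $\|X_{i}\|$ out through the matrix multiplications and combining them into the single scalar $c$ is legitimate, and that $c\otimes{\cal E}={\cal E}$ still holds in the limiting case $c=\varepsilon$ (by the absorption rule $\varepsilon\otimes x=\varepsilon$). The reduction to $G^{m}$ and the vanishing of $G^{m}$ for $m>p$ then follow immediately from the results already established, with the case of some $m_{i}=0$ causing no difficulty since both sides reduce to the identity factor $I$.
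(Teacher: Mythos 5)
Your proposal is correct and follows essentially the same route as the paper's own proof: bound each factor via Proposition~\ref{P-XXG}, pull the scalar norms out to obtain $X\le\|X_{1}\|^{m_{1}}\otimes\cdots\otimes\|X_{k}\|^{m_{k}}\otimes G^{m_{1}+\cdots+m_{k}}$, and invoke $G^{q}={\cal E}$ for $q>p$ on the acyclic graph to conclude $X\le{\cal E}$, hence $X={\cal E}$. The scalar-commutation and absorption details you flag as delicate are simply taken for granted in the paper, so there is no substantive difference.
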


\begin{proof}
It follows from Proposition~\ref{P-XXG} that
$$
X=X_{1}^{m_{1}}\otimes\cdots\otimes X_{k}^{m_{k}}
\le
\|X_{1}\|^{m_{1}}\otimes\cdots\otimes\|X_{k}\|^{m_{k}}
\otimes G^{m_{1}+\cdots+m_{k}},
$$
where $ G $ is the adjacency ($\varepsilon$--0)-matrix of the common
associated graph.

Since the graph is acyclic, it holds that $ G^{q}={\cal E} $ for all
$ q>p $. Therefore, with $ q=m_{1}+\cdots+m_{k}>p $, we arrive at the
inequality $ X \le {\cal E} $ which leads us to the desired result.
\qed
\end{proof}

\begin{lemma}\label{L-IXX}
Suppose that matrices $ X_{1},\ldots,X_{k} $ have a common associated
acyclic graph, and $ p $ is the length of the longest path in the graph.

If $ \|X_{i}\|\ge 0 $ for all $ i=1,\ldots,k $, then it holds
$$
\left\|\bigotimes_{i=1}^{k}(I\oplus X_{i})^{m_{i}}\right\|
\leq
\left(\bigoplus_{i=1}^{k}\|X_{i}\|\right)^{p}
$$
for any nonnegative integers $ m_{1},\ldots,m_{k} $.
\end{lemma}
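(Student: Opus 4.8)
The plan is to dominate the whole product by a single power of one matrix built from the common graph, and then to exploit acyclicity to suppress every contribution of degree exceeding $p$. First I would put $M=\bigoplus_{i=1}^{k}\|X_i\|$ and record two consequences of the hypotheses: since each $\|X_i\|\ge 0$ we have $M\ge 0$, and by construction $\|X_i\|\le M$ for every $i$. Applying Proposition~\ref{P-XXG} to each $X_i$ and then using monotonicity of scalar multiplication gives $X_i\le\|X_i\|\otimes G\le M\otimes G$, where $G$ is the adjacency ($\varepsilon$--0)-matrix of the common associated graph; monotonicity of $\oplus$ then yields $I\oplus X_i\le I\oplus M\otimes G$ for all $i$.

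Next I would pass to powers and products. By monotonicity of $\otimes$ we have $(I\oplus X_i)^{m_i}\le(I\oplus M\otimes G)^{m_i}$, and since every bounding factor is the \emph{same} matrix the product telescopes,
$$
\bigotimes_{i=1}^{k}(I\oplus X_i)^{m_i}\le(I\oplus M\otimes G)^{N},
\qquad N=m_1+\cdots+m_k.
$$
Because the norm is monotone, it now suffices to bound $\|(I\oplus M\otimes G)^{N}\|$.

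I would then expand this single power through the idempotency identity $(I\oplus Y)^{N}=\bigoplus_{j=0}^{N}Y^{j}$ with $Y=M\otimes G$, noting that $(M\otimes G)^{j}=M^{j}\otimes G^{j}$ since $M$ is a scalar. Acyclicity forces $G^{j}={\cal E}$ for every $j>p$, so the sum truncates at $j=\min(N,p)$. Taking the norm, using its additive property together with $\|M^{j}\otimes G^{j}\|=M^{j}\otimes\|G^{j}\|$, and observing that $\|G^{j}\|=0$ for $0\le j\le p$ (a prefix of a longest path provides a path of length $j$, so $G^{j}\ne{\cal E}$), reduces the bound to $\bigoplus_{j=0}^{\min(N,p)}M^{j}$. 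Since $M\ge 0$ this maximum is attained at the largest admissible exponent, hence is at most $M^{p}=\left(\bigoplus_{i=1}^{k}\|X_i\|\right)^{p}$, which is the assertion.

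The step demanding the most care is the combination of telescoping and truncation: one must resist distributing $M$ across the entire product, which would leave a factor $M^{N}$ and ruin the estimate, and instead keep $M\otimes G$ intact so that the nilpotency $G^{j}={\cal E}$ for $j>p$ can eliminate the high-degree terms before the scalar is extracted. The sign condition $\|X_i\|\ge 0$, hence $M\ge 0$, is equally essential at the end, where it ensures that replacing the exponent $\min(N,p)$ by $p$ can only enlarge the bound.
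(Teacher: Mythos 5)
Your proof is correct, and it takes a genuinely different route from the paper's. The paper expands the product $\bigotimes_{i}(I\oplus X_{i})^{m_{i}}$ directly into a sum of mixed monomials $X_{1}^{i_{1}}\otimes\cdots\otimes X_{k}^{i_{k}}$, invokes Proposition~\ref{P-XEE} to discard every monomial of total degree exceeding $p$, and only then passes to norms, bounding each $\|X_{i}\|$ by $M=\bigoplus_{i}\|X_{i}\|$. You instead majorize at the matrix level from the outset: Proposition~\ref{P-XXG} gives $X_{i}\le M\otimes G$, so the whole product collapses into the single power $(I\oplus M\otimes G)^{N}$ with $N=m_{1}+\cdots+m_{k}$, and the nilpotency $G^{j}={\cal E}$ for $j>p$ performs the truncation before any norm is taken; the hypothesis $\|X_{i}\|\ge 0$ then enters, exactly as in the paper's last step, to let you pad the exponent $\min(N,p)$ up to $p$. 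Your route bypasses Proposition~\ref{P-XEE} entirely (in effect you inline its one-line proof, which itself rests on Proposition~\ref{P-XXG}), and it trades the paper's multi-index bookkeeping over $\{(i_{1},\ldots,i_{k})\,:\,i_{1}+\cdots+i_{k}\le p\}$ for a one-index expansion whose norm you can evaluate exactly, $\|(I\oplus M\otimes G)^{N}\|=M^{\min(N,p)}$; this is cleaner, makes visible precisely where acyclicity and the sign condition are used, and even records the marginally stronger exponent $\min(N,p)$ in passing. What the paper's organization buys is modularity: the nilpotency argument is packaged once as Proposition~\ref{P-XEE}, a statement about products of the original matrices themselves, which the paper cites again in the proof of Lemma~\ref{L-TXT}, where scalar factors are peeled off a product of genuinely different matrices before this lemma is applied. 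One small point of care in your write-up: besides $G^{j}\ne{\cal E}$ for $j\le p$, the evaluation $\|G^{j}\|=0$ also uses that every entry of $G^{j}$ lies in $\{\varepsilon,0\}$, which holds because $G$ is an ($\varepsilon$--$0$)-matrix and this property is preserved by $\oplus$ and $\otimes$; for the inequality you need only $\|G^{j}\|\le 0$, which is immediate from that observation.
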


\begin{proof}
Consider the matrix
\begin{eqnarray*}
X
&=&
\bigotimes_{i=1}^{k}(I\oplus X_{i})^{m_{i}}=
\bigoplus_{i_{1}=0}^{m_{1}}X_{1}^{i_{1}}\otimes\cdots\otimes
\bigoplus_{i_{k}=0}^{m_{k}}X_{k}^{i_{k}} \\
&=&
\bigoplus_{i_{1}=0}^{m_{1}}\cdots\bigoplus_{i_{k}=0}^{m_{k}}
X_{1}^{i_{1}}\otimes\cdots\otimes X_{k}^{i_{k}}
\le
\bigoplus_{0\leq i_{1}+\cdots+i_{k}\leq m}
X_{1}^{i_{1}}\otimes\cdots\otimes X_{k}^{i_{k}},
\end{eqnarray*}
where $ m=m_{1}+\cdots+m_{k} $. From Proposition~\ref{P-XEE} we may replace
$ m $ with $ p $ in the last term to get
$$
X \le  \bigoplus_{0\leq i_{1}+\cdots+i_{k}\leq p}
        X_{1}^{i_{1}}\otimes\cdots\otimes X_{k}^{i_{k}}.
$$

Proceeding to the norm, with its additive and multiplicative properties, we
arrive at the inequality
$$
\|X\|
\leq
\bigoplus_{0\leq i_{1}+\cdots+i_{k}\leq p}
\|X_{1}\|^{i_{1}}\otimes\cdots\otimes
\|X_{k}\|^{i_{k}}.
$$
Since for all $ i=1,\ldots,k $, it holds
$ 0\leq\|X_{i}\|\leq\|X_{1}\|\oplus\cdots\oplus\|X_{k}\| $, we finally have
$$
\|X\|
\leq
\bigoplus_{i=0}^{p}\left(\|X_{1}\|\oplus\cdots\oplus\|X_{k}\|\right)^{p}
=
\left(\bigoplus_{i=0}^{k}\|X_{i}\|\right)^{p}.
$$
\qed
\end{proof}

\section{An Algebraic Model of Queueing Networks}\label{S-AMQ}
We consider a network with $ n $ single-server nodes and customers of a
single class. The topology of the network is described by an oriented acyclic
graph $ {\cal G}=({\bf  N}, {\bf  A}) $, where the set
$ {\bf  N}=\{1,\ldots,n\} $ represents the nodes, and
$ {\bf  A}=\{(i,j)\} \subset {\bf  N} \times {\bf  N} $ does the arcs
determining the transition routes of customers.

For every node $ i \in {\bf  N} $, we denote the sets of its immediate
predecessors and successors respectively as
$ {\bf  P}(i)=\{ j | \, (j,i) \in {\bf  A} \} $ and
$ {\bf  S}(i)=\{ j | \, (i,j) \in {\bf  A} \} $. In specific cases, there
may be one of the conditions $ {\bf  P}(i)=\emptyset $ and
$ {\bf  S}(i)=\emptyset $ encountered. Each node $ i $ with
$ {\bf  P}(i)=\emptyset $ is assumed to represent an infinite external
arrival stream of customers; provided that $ {\bf  S}(i)=\emptyset $, it is
considered as an output node intended to release customers from the network.

Each node $ i \in {\bf  N} $ includes a server and its buffer with
infinite capacity, which together present a single-server queue operating
under the
first-come, first-served (FCFS) discipline. At the initial time, the server at
each node $ i $ is assumed to be free of customers, whereas in its buffer,
there may be $ r_{i} $, $ 0 \leq r_{i} \leq \infty $, customers waiting
for service. The value $ r_{i}=\infty $ is set for every node $ i $
with $ {\bf  P}(i)=\emptyset $, which represents an external arrival stream
of customers.

For the queue at node $ i $, we denote the $k$th arrival and departure
epochs respectively as $ u_{i}(k) $ and $ x_{i}(k) $. Furthermore, the
service time of the $k$th customer at server $ i $ is indicated by
$ \tau_{ik} $. We assume that $ \tau_{ik}\geq0 $ are given parameters
for all $ i=1,\ldots,n $, and $ k=1,2,\ldots $, while $ u_{i}(k) $ and
$ x_{i}(k) $ are considered as unknown state variables. With the condition
that the network starts operating at time zero, it is convenient to set
$ x_{i}(0) \equiv 0 $, and $ x_{i}(k) \equiv \varepsilon $ for all
$ k < 0 $, $ i=1,\ldots,n $.

It is easy to set up an equation which relates the system state variables. In
fact, the dynamics of any single-server node $ i $ with an infinite
buffer, operating on the FCFS basis, is described as
\begin{equation}\label{e-3}
x_{i}(k)=\tau_{ik} \otimes u_{i}(k) \oplus \tau_{ik} \otimes x_{i}(k-1).
\end{equation}
With the vector-matrix notations
$$
\mbox{\boldmath $u$}(k)
=\left(
   \begin{array}{c}
      u_{1}(k) \\
      \vdots   \\
      u_{n}(k)
   \end{array}
 \right),
\quad
\mbox{\boldmath $x$}(k)
=\left(
   \begin{array}{c}
      x_{1}(k) \\
      \vdots   \\
      x_{n}(k)
   \end{array}
 \right),
\quad
{\cal T}_{k}
=\left(
  \begin{array}{ccc}
    \tau_{1k}   &        & \varepsilon \\
                & \ddots &             \\
    \varepsilon &        & \tau_{nk}
  \end{array}
 \right),
$$
we may rewrite equation (\ref{e-3}) in a vector form, as
\begin{equation}\label{e-4}
\mbox{\boldmath $x$}(k)
={\cal T}_{k}\otimes\mbox{\boldmath $u$}(k)\oplus{\cal T}_{k}
\otimes\mbox{\boldmath $x$}(k-1).
\end{equation}

\subsection{Fork-Join Queueing Networks}

In fork-join networks, in addition to the usual service procedure, special
join and fork operations are performed in its nodes, respectively before and
after service. The join operation is actually thought to cause each customer
which comes into node $ i $, not to enter the buffer at the server but to
wait until at least one customer from every node $ j \in {\bf  P}(i) $
arrives. As soon as these customers arrive, they, taken one from each
preceding node, are united into one customer which then enters the buffer to
become a new member of the queue.

The fork operation at node $ i $ is initiated every time the service of a
customer is completed; it consists in giving rise to several new customers
instead of the original one. As many new customers appear in node $ i $ as
there are succeeding nodes included in the set $ {\bf  S}(i) $. These
customers simultaneously depart the node, each being passed to separate node
$ j \in {\bf  S}(i) $. We assume that the execution of fork-join operations
when appropriate customers are available, as well as the transition of
customers within and between nodes require no time.

As it immediately follows from the above description of the fork-join
operations, the $k$th arrival epoch into the queue at node $ i $ is
represented as
\begin{equation}\label{e-5}
u_{i}(k)=\left\{
           \begin{array}{ll}
	    {\displaystyle\bigoplus_{j\in {\bf  P}(i)}} x_{j}(k-r_{i}),
                          & \mbox{if $ {\bf  P}(i) \neq \emptyset $}, \\
            \varepsilon,  & \mbox{if $ {\bf  P}(i)=\emptyset $}.
	   \end{array}
          \right.
\end{equation}

In order to get this equation in a vector form, we first define the number
$ M=\max\{r_{i}|\, r_{i}<\infty, \, i=1,\ldots,n\} $. Now we may rewrite
(\ref{e-5}) as
$$
u_{i}(k)=\bigoplus_{m=0}^{M}\bigoplus_{j=1}^{n}g_{ji}^{m}\otimes x_{j}(k-m),
$$
where the numbers $ g_{ij}^{m} $ are determined by the condition
\begin{equation}\label{e-6}
g_{ij}^{m}=\left\{\begin{array}{ll}
               0, & \mbox{if $ i \in {\bf  P}(j) $ and $ m=r_{j} $}, \\
               \varepsilon, & \mbox{otherwise}.
             \end{array}\right.
\end{equation}

Let us introduce the matrices $ G_{m}=\left(g_{ij}^{m}\right) $ for each
$ m=0,1,\ldots,M $. In fact, $ G_{m} $ presents an adjacency matrix of
the partial graph $ {\cal G}_{m}=({\bf  N},{\bf  A}_{m}) $ with
$ {\bf  A}_{m}=\{(i,j)|\; (i,j)\in{\bf  A}; \; r_{j}=m\} $. Since the
graph of the entire network is acyclic, all its partial graphs
$ {\cal G}_{m} $, $ m=0,1,\ldots,M $, possess the same property.

With these matrices, equation (\ref{e-5}) may be written in the vector form
\begin{equation}\label{e-7}
\mbox{\boldmath $u$} (k)
=\bigoplus_{m=0}^{M} G_{m}^{T}\otimes\mbox{\boldmath $x$}(k-m),
\end{equation}
where $ G_{m}^{T} $ denotes the transpose of the matrix $ G_{m} $.

By combining equations (\ref{e-4}) and (\ref{e-7}), we arrive at the equation
\begin{eqnarray}
\lefteqn{\mbox{\boldmath $x$}(k)
={\cal T}_{k}\otimes G_{0}^{T}\otimes\mbox{\boldmath $x$}(k)
 \oplus{\cal T}_{k}\otimes\mbox{\boldmath $x$}(k-1)} \hspace{20mm}\nonumber \\
&& \oplus{\cal T}_{k}\otimes\bigoplus_{m=1}^{M} G_{m}^{T}
\otimes\mbox{\boldmath $x$}(k-m).\label{e-8}
\end{eqnarray}
Clearly, it is actually an implicit equation in $ \mbox{\boldmath $x$}(k) $,
which has the form of (\ref{e-1}), with
$ U={\cal T}_{k} \otimes G_{0}^{T} $. Taking into account that the matrix
$ {\cal T}_{k} $ is diagonal, one can prove the following statement
(see also \cite{Krivulin1996Themax-plus,Krivulin1996Max-plus}).

\begin{theorem}\label{T-XAX}
Suppose that in the fork-join network model, the graph $ {\cal G}_{0} $
associated with the matrix $ G_{0} $ is acyclic. Then equation
(\ref{e-8}) can be solved to produce the explicit dynamic state equation
\begin{equation}\label{e-9}
\mbox{\boldmath $x$}(k)
=\bigoplus_{m=1}^{M} A_{m}(k)\otimes\mbox{\boldmath $x$}(k-m),
\end{equation}
with the state transition matrices
\begin{eqnarray}
A_{1}(k) &=& (I\oplus{\cal T}_{k}\otimes G_{0}^{T})^{p}\otimes{\cal T}_{k}
                                  \otimes(I \oplus G_{1}^{T}),\label{e-10} \\
A_{m}(k) &=& (I\oplus{\cal T}_{k}\otimes G_{0}^{T})^{p}
       \otimes{\cal T}_{k}\otimes G_{m}^{T}, \quad m=2,\ldots,M,\label{e-11}
\end{eqnarray}
where $ p $ is the length of the longest path in $ {\cal G}_{0} $.
\end{theorem}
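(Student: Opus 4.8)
The plan is to solve the implicit equation (\ref{e-8}) for $\boldsymbol{x}(k)$ by treating it as an instance of the fixed-point equation (\ref{e-1}) and then applying the explicit solution formula (\ref{e-2}). First I would rewrite (\ref{e-8}) in the exact shape of (\ref{e-1}), collecting all terms that contain $\boldsymbol{x}(k)$ on the implicit side and bundling the remaining delayed terms $\boldsymbol{x}(k-1),\ldots,\boldsymbol{x}(k-M)$ into the role of the known vector $\boldsymbol{v}$. This identifies the coefficient matrix as $U = \mathcal{T}_{k}\otimes G_{0}^{T}$ and the forcing term as $\boldsymbol{v} = \mathcal{T}_{k}\otimes\boldsymbol{x}(k-1)\oplus\mathcal{T}_{k}\otimes\bigoplus_{m=1}^{M}G_{m}^{T}\otimes\boldsymbol{x}(k-m)$.

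Next I would verify the hypothesis of the solution formula, namely that the graph associated with $U=\mathcal{T}_{k}\otimes G_{0}^{T}$ is acyclic. Since $\mathcal{T}_{k}$ is diagonal with finite diagonal entries (and $\varepsilon$ off-diagonal), left-multiplication by $\mathcal{T}_{k}$ does not alter the sparsity pattern of $G_{0}^{T}$: the entry $(\mathcal{T}_{k}\otimes G_{0}^{T})_{ij}=\tau_{ik}\otimes(G_{0}^{T})_{ij}$ is finite exactly when $(G_{0}^{T})_{ij}$ is. Hence $U$ has the same associated graph as $G_{0}^{T}$, which by hypothesis is acyclic with longest path length $p$. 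By the result preceding (\ref{e-2}), equation (\ref{e-8}) then has the unique bounded solution
$$
\boldsymbol{x}(k)=(I\oplus U)^{p}\otimes\boldsymbol{v}
=(I\oplus\mathcal{T}_{k}\otimes G_{0}^{T})^{p}\otimes\boldsymbol{v}.
$$

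Substituting $\boldsymbol{v}$ back and distributing $(I\oplus\mathcal{T}_{k}\otimes G_{0}^{T})^{p}\otimes\mathcal{T}_{k}$ over the $\oplus$-sum, I would separate the $\boldsymbol{x}(k-1)$ contribution from the $m\geq 1$ contributions. The key observation is that the $\boldsymbol{x}(k-1)$ term receives \emph{two} sources: the standalone $\mathcal{T}_{k}\otimes\boldsymbol{x}(k-1)$ term and the $m=1$ summand $\mathcal{T}_{k}\otimes G_{1}^{T}\otimes\boldsymbol{x}(k-1)$. Combining these using the identity $\mathcal{T}_{k}\oplus\mathcal{T}_{k}\otimes G_{1}^{T}=\mathcal{T}_{k}\otimes(I\oplus G_{1}^{T})$ (valid by distributivity) produces precisely the matrix $A_{1}(k)$ of (\ref{e-10}), while each $m=2,\ldots,M$ summand yields $A_{m}(k)$ of (\ref{e-11}). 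Reading off the coefficient of each $\boldsymbol{x}(k-m)$ gives the claimed form (\ref{e-9}).

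The step I expect to require the most care is the bookkeeping of the $m=1$ term rather than any deep argument: one must correctly recognize that the explicit $\mathcal{T}_{k}\otimes\boldsymbol{x}(k-1)$ term in (\ref{e-8}) and the $m=1$ term inside the sum both multiply $\boldsymbol{x}(k-1)$ and must be merged, whereas the formula for $A_{m}(k)$ with $m\geq 2$ omits the extra identity factor. The verification that $U$ and $G_{0}^{T}$ share an associated graph — so that acyclicity transfers and the finite longest-path length $p$ is well defined — is the only genuinely substantive point, and it rests entirely on $\mathcal{T}_{k}$ being diagonal with finite entries.
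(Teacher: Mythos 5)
Your proposal is correct and follows precisely the route the paper itself indicates (the paper only sketches it, citing that ${\cal T}_{k}$ is diagonal and deferring details to references): cast (\ref{e-8}) as an instance of (\ref{e-1}) with $U={\cal T}_{k}\otimes G_{0}^{T}$, invoke the acyclicity transfer to apply (\ref{e-2}), and collect the two contributions to the $\mbox{\boldmath $x$}(k-1)$ coefficient via $I\oplus G_{1}^{T}$. The only point you gloss over is that the hypothesis concerns the graph of $G_{0}$ whereas the solution formula is applied to $G_{0}^{T}$, but arc reversal preserves both acyclicity and the longest path length $p$, so this is immaterial.
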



\subsection{Examples of Network Models}

An example of an acyclic fork-join network with $ n=5 $ is shown in
Fig.~\ref{F-AFJ}.
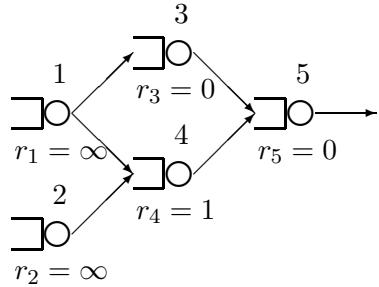
\begin{figure}[hhh]
\begin{center}
\setlength{\unitlength}{1mm}
\begin{picture}(48,36)
\newsavebox\queue
\savebox{\queue}(10,6){\thicklines
 \put(0,2){\line(1,0){4}}
 \put(0,-2){\line(1,0){4}}
 \put(4,2){\line(0,-1){4}}
 \put(6,0){\circle{3}}}

\put(0,19){\usebox\queue}
\put(5,26){$1$}
\put(0,16){$r_{1}=\infty$}
\put(8,22){\vector(1,1){8}}
\put(8,22){\vector(1,-1){8}}
\put(0,3){\usebox\queue}
\put(5,10){$2$}
\put(0,0){$r_{2}=\infty$}
\put(8,6){\vector(1,1){8}}

\put(16,27){\usebox\queue}
\put(21,34){$3$}
\put(16,24){$r_{3}=0$}
\put(24,30){\vector(1,-1){8}}
\put(16,11){\usebox\queue}
\put(21,18){$4$}
\put(16,8){$r_{4}=1$}
\put(24,14){\vector(1,1){8}}

\put(32,19){\usebox\queue}
\put(37,26){$5$}
\put(32,16){$r_{5}=0$}
\put(40,22){\vector(1,0){8}}

\end{picture}
\end{center}
\caption{An acyclic fork-join network.}\label{F-AFJ}
\end{figure}

Since for the network $ M=1 $, we have from (\ref{e-6})
$$
G_{0}=\left(\begin{array}{ccccc}
        \varepsilon & \varepsilon & 0           & \varepsilon & \varepsilon \\
        \varepsilon & \varepsilon & \varepsilon & \varepsilon & \varepsilon \\
        \varepsilon & \varepsilon & \varepsilon & \varepsilon & 0 \\
        \varepsilon & \varepsilon & \varepsilon & \varepsilon & 0 \\
        \varepsilon & \varepsilon & \varepsilon & \varepsilon & \varepsilon
      \end{array}\right),
\qquad
G_{1}=\left(\begin{array}{ccccc}
        \varepsilon & \varepsilon & \varepsilon & 0           & \varepsilon \\
        \varepsilon & \varepsilon & \varepsilon & 0           & \varepsilon \\
        \varepsilon & \varepsilon & \varepsilon & \varepsilon & \varepsilon \\
        \varepsilon & \varepsilon & \varepsilon & \varepsilon & \varepsilon \\
        \varepsilon & \varepsilon & \varepsilon & \varepsilon & \varepsilon
      \end{array}\right).
$$

Taking into account that for the graph $ {\cal G}_{0} $, the length of its
longest path $ p=2 $, we arrive at the dynamic equation
$$
\mbox{\boldmath $x$}(k)=A(k)\otimes\mbox{\boldmath $x$}(k-1),
$$
with the state transition matrix calculated from (\ref{e-10}) as
\begin{eqnarray*}
\lefteqn{A(k)=(I\oplus{\cal T}_{k}\otimes G_{0}^{T})^{2}\otimes{\cal T}_{k}
                                       \otimes(I \oplus G_{1}^{T})} \\
& &
=  \left(\begin{array}{ccccc}
       \tau_{1k}   & \varepsilon & \varepsilon & \varepsilon & \varepsilon \\
       \varepsilon & \tau_{2k}   & \varepsilon & \varepsilon & \varepsilon \\
       \tau_{1k}\!\otimes\!\tau_{3k}
                   & \varepsilon & \tau_{3k}   & \varepsilon & \varepsilon \\
       \tau_{4k}   & \tau_{4k}   & \varepsilon & \tau_{4k}   & \varepsilon \\
       (\tau_{1k}\!\otimes\!\tau_{3k}\!\oplus\!\tau_{4k})\!\otimes\!\tau_{5k}
                   & \tau_{4k}\!\otimes\!\tau_{5k}
                                 & \tau_{3k}\!\otimes\!\tau_{5k}
                                               & \tau_{4k}\!\otimes\!\tau_{5k}
                                                             & \tau_{5k}
   \end{array}\right).
\end{eqnarray*}

Note that open tandem queueing systems (see Fig.~\ref{F-OTQ}) can be
considered as trivial networks in which no fork and join operations are
actually performed.

\begin{figure}[hhh]
\begin{center}
\setlength{\unitlength}{1mm}
\begin{picture}(64,12)
\savebox{\queue}(10,6){\thicklines
 \put(0,2){\line(1,0){4}}
 \put(0,-2){\line(1,0){4}}
 \put(4,2){\line(0,-1){4}}
 \put(6,0){\circle{3}}}

\put(0,3){\usebox\queue}
\put(5,10){$1$}
\put(0,0){$r_{1}=\infty$}
\put(8,6){\vector(1,0){8}}
\put(16,3){\usebox\queue}
\put(21,10){$2$}
\put(16,0){$r_{2}=0$}
\put(24,6){\vector(1,0){8}}
\multiput(34,6)(2,0){3}{\circle*{1}}
\put(40,6){\vector(1,0){8}}
\put(48,3){\usebox\queue}
\put(53,10){$n$}
\put(48,0){$r_{n}=0$}
\put(56,6){\vector(1,0){8}}

\end{picture}
\end{center}
\caption{Open tandem queues.}\label{F-OTQ}
\end{figure}
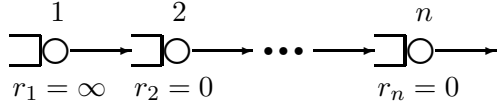

For the system in Fig.~\ref{F-OTQ}, we have $ M=0 $, and $ p=n-1 $. Its
related state transition matrix $ A(k) $ has the entries \cite{Krivulin1994Using,Krivulin1995Amax-algebra}
$$
a_{ij}(k)
=\left\{
  \begin{array}{ll}
   \tau_{jk}\otimes\tau_{j+1k}\otimes\cdots\otimes\tau_{ik},
                        & \mbox{if $ i\geq j$}, \\
   \varepsilon, & \mbox{otherwise}.
  \end{array}
 \right.
$$

\section{A Monotonicity Property}\label{S-MPR}
In this section, a property of monotonicity is established which shows how the
system state vector $ \mbox{\boldmath $x$}(k) $ may vary with the initial
numbers of customers $ r_{i} $. It is actually proven that the entries of
$ \mbox{\boldmath $x$}(k) $ for all $ k=1,2,\ldots $, do not decrease
when the numbers $ r_{i} $ with $ 0<r_{i}<\infty $, $ i=1,\ldots,n $,
are reduced to zero.

As it is easy to see, the change in the initial numbers of customers
results only in modifications to partial graphs $ {\cal G}_{m} $ and so to
their adjacency matrices $ G_{m} $. Specifically, reducing these numbers to
zero leads us to new matrices
$ \widetilde{G}_{0}=G_{0}\oplus G_{1}\cdots\oplus G_{M} $, and
$ \widetilde{G}_{m}={\cal E} $ for all $ m=1,\ldots,M $.

We start with a lemma which shows that replacing the numbers
$ r_{i}=1 $ with $ r_{i}=0 $ does not decrease the entries of the
matrix $ A_{1}(k) $ defined by (\ref{e-10}).

\begin{lemma}\label{L-ALA}
For all $ k=1,2,\ldots $, it holds
$$
A_{1}(k)\leq\widetilde{A}(k)
$$
with $ \widetilde{A}(k)
=(I\oplus{\cal T}_{k}\otimes\widetilde{G}_{0}^{T})^{q}\otimes{\cal T}_{k} $,
where $ \widetilde{G}_{0}=G_{0}\oplus G_{1} $, and $ q $ is the length
of the longest path in the graph associated with the matrix
$ \widetilde{G}_{0} $.
\end{lemma}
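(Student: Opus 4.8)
The plan is to work in $ \mathbb{R}_{\max} $, abbreviate $ T={\cal T}_{k} $, and exploit that $ \widetilde{G}_{0}^{T}=G_{0}^{T}\oplus G_{1}^{T} $ together with the fact that the graph $ \widetilde{\cal G}_{0} $ associated with $ \widetilde{G}_{0} $ is again acyclic. First I would expand $ A_{1}(k) $ by distributivity into
$$
A_{1}(k)
=(I\oplus T\otimes G_{0}^{T})^{p}\otimes T
\;\oplus\;
(I\oplus T\otimes G_{0}^{T})^{p}\otimes T\otimes G_{1}^{T},
$$
and aim to bound each of the two summands separately by $ \widetilde{A}(k)=(I\oplus T\otimes\widetilde{G}_{0}^{T})^{q}\otimes T $.

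Next I would record three structural facts. (i) Since $ \widetilde{\cal G}_{0} $ contains all arcs of $ {\cal G}_{0} $, its longest path satisfies $ q\geq p $. (ii) Writing $ S=(I\oplus T\otimes\widetilde{G}_{0}^{T})^{q} $, acyclicity of $ \widetilde{\cal G}_{0} $ gives $ (T\otimes\widetilde{G}_{0}^{T})^{m}={\cal E} $ for every $ m>q $ (the $ \varepsilon $-pattern of $ T\otimes\widetilde{G}_{0}^{T} $ equals that of $ \widetilde{G}_{0}^{T} $, so the relevant graph is $ \widetilde{\cal G}_{0} $ up to reversal, with longest path $ q $), so that $ S\otimes(I\oplus T\otimes\widetilde{G}_{0}^{T})=(I\oplus T\otimes\widetilde{G}_{0}^{T})^{q+1}=S $; equivalently the closure $ S $ is stable and $ S\otimes T\otimes\widetilde{G}_{0}^{T}\leq S $. (iii) Because $ \tau_{ik}\geq 0 $, the diagonal matrix $ T $ satisfies $ T\geq I $, whence $ S\otimes T\geq S\otimes I=S $ by monotonicity.

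Then I would assemble the bound. For the first summand, $ G_{0}^{T}\leq\widetilde{G}_{0}^{T} $ and monotonicity give $ (I\oplus T\otimes G_{0}^{T})^{p}\leq(I\oplus T\otimes\widetilde{G}_{0}^{T})^{p} $, and since $ p\leq q $ the inequality $ (I\oplus X)^{p}\leq(I\oplus X)^{q} $ from Section~\ref{S-PAD} yields $ (I\oplus T\otimes G_{0}^{T})^{p}\leq S $; multiplying on the right by $ T $ bounds the first summand by $ S\otimes T=\widetilde{A}(k) $. For the second summand I would use the same estimate $ (I\oplus T\otimes G_{0}^{T})^{p}\leq S $, then $ G_{1}^{T}\leq\widetilde{G}_{0}^{T} $ and fact (ii) to obtain $ (I\oplus T\otimes G_{0}^{T})^{p}\otimes T\otimes G_{1}^{T}\leq S\otimes T\otimes\widetilde{G}_{0}^{T}\leq S $, and finally fact (iii) in the form $ S\leq S\otimes T=\widetilde{A}(k) $. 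Taking the $ \oplus $ of the two bounds gives $ A_{1}(k)\leq\widetilde{A}(k) $.

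The main obstacle, and the place where the hypotheses really enter, is the second summand: its trailing factor $ G_{1}^{T} $ has no matching $ T $, so a naive term-by-term comparison against $ \widetilde{A}(k) $ does not line up. The stabilization of the closure $ S $ (acyclicity of the combined graph) is what absorbs the extra factor $ T\otimes G_{1}^{T} $ back into $ S $, and the non-negativity of the service times, through $ T\geq I $, is exactly what restores the missing trailing $ T $. I would therefore take particular care to justify $ (T\otimes\widetilde{G}_{0}^{T})^{q+1}={\cal E} $ and the resulting identity $ S\otimes(I\oplus T\otimes\widetilde{G}_{0}^{T})=S $, since the entire argument for the second summand rests on it.
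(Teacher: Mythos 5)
Your proof is correct, and its treatment of the crucial second summand is genuinely different from the paper's. Both arguments start from the same decomposition
$ A_{1}(k)=(I\oplus{\cal T}_{k}\otimes G_{0}^{T})^{p}\otimes{\cal T}_{k}
\oplus(I\oplus{\cal T}_{k}\otimes G_{0}^{T})^{p}\otimes{\cal T}_{k}\otimes G_{1}^{T} $
and bound the two terms separately, and both handle the first term identically (monotonicity via $ G_{0}^{T}\leq\widetilde{G}_{0}^{T} $ together with $ p\leq q $). The difference lies in the second term, i.e.\ inequality (\ref{e-13}): the paper proceeds by power counting inside $ (I\oplus{\cal T}_{k}\otimes\widetilde{G}_{0}^{T})^{q} $, which forces a case split --- when $ q>p $ it extracts the word $ (I\oplus{\cal T}_{k}\otimes G_{0}^{T})^{p}\otimes{\cal T}_{k}\otimes G_{1}^{T} $ from the expansion of the $(p+1)$st power, while when $ q=p $ it needs the separate graph-theoretic observation that $ G_{1}\otimes G_{0}^{p}={\cal E} $ (otherwise $ q $ would exceed $ p $) to make the top-order term vanish. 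Your absorption argument replaces this case analysis by the single stabilization identity $ S\otimes{\cal T}_{k}\otimes\widetilde{G}_{0}^{T}\leq S $ for $ S=(I\oplus{\cal T}_{k}\otimes\widetilde{G}_{0}^{T})^{q} $, which follows from $ ({\cal T}_{k}\otimes\widetilde{G}_{0}^{T})^{q+1}={\cal E} $ by acyclicity of the combined graph; the maximality of $ q $ as a path length thus enters exactly once, uniformly, rather than through two separate computations. Both proofs invoke $ {\cal T}_{k}\geq I $ (nonnegativity of the service times) at the same spot, to restore the trailing factor $ {\cal T}_{k} $. The net effect is that your route is shorter and somewhat more robust --- it would survive replacing $ G_{1} $ by any matrix $ H $ with $ H\leq\widetilde{G}_{0} $ --- whereas the paper's route makes the underlying combinatorial mechanism (that $ q=p $ forces $ G_{1}\otimes G_{0}^{p}={\cal E} $) explicit.
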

\begin{proof}
Consider the matrix $ A_{1}(k) $ and represent it in the form
$$
A_{1}(k)
=
((I\oplus{\cal T}_{k}\otimes G_{0}^{T})^{p}\otimes{\cal T}_{k})
\oplus((I\oplus{\cal T}_{k}\otimes G_{0}^{T})^{p}\otimes{\cal T}_{k}
\otimes G_{1}^{T}),
$$
where $ p $ is the length of the longest path in the graph associated
with $ G_{0} $.

As one can see, to prove the lemma, it will suffice to verify both
inequalities
\begin{eqnarray}
\widetilde{A}(k)
& \geq &
(I\oplus{\cal T}_{k}\otimes G_{0}^{T})^{p}\otimes{\cal T}_{k},\label{e-12} \\
\widetilde{A}(k)
& \geq &
(I\oplus{\cal T}_{k}\otimes G_{0}^{T})^{p}\otimes{\cal T}_{k}
\otimes G_{1}^{T}.\label{e-13}
\end{eqnarray}

Let us write the obvious representation
$$
(I\oplus{\cal T}_{k}\otimes\widetilde{G}_{0}^{T})^{q}
=\bigoplus_{i=0}^{q}
(I\oplus{\cal T}_{k}\otimes G_{0}^{T})^{i}
\otimes({\cal T}_{k}\otimes G_{1}^{T})^{q-i}.
$$
Since $ q\geq p $, we get from the representation
$$
(I\oplus{\cal T}_{k}\otimes\widetilde{G}_{0}^{T})^{q}
\geq (I\oplus{\cal T}_{k}\otimes\widetilde{G}_{0}^{T})^{p}
=((I\oplus{\cal T}_{k}\otimes G_{0}^{T})
\oplus{\cal T}_{k}\otimes G_{1}^{T})^{p}
\geq (I\oplus{\cal T}_{k}\otimes G_{0}^{T})^{p}.
$$
It remains to multiply both sides of the above inequality by
$ {\cal T}_{k} $ on the right so as to arrive at (\ref{e-12}).

To verify (\ref{e-13}), let us first assume that $ q>p $. In this case,
we obtain
\begin{eqnarray*}
\lefteqn{(I\oplus{\cal T}_{k}\otimes\widetilde{G}_{0}^{T})^{q}
\geq
(I\oplus{\cal T}_{k}\otimes\widetilde{G}_{0}^{T})^{p+1}} \\
& & =
((I\oplus{\cal T}_{k}\otimes G_{0}^{T})
\oplus{\cal T}_{k}\otimes G_{1}^{T})^{p+1}
\geq
(I\oplus{\cal T}_{k}\otimes G_{0}^{T})^{p}\otimes{\cal T}_{k}
\otimes G_{1}^{T}.
\end{eqnarray*}

Suppose now that $ q=p $. Then it is necessary that
$ G_{1}\otimes G_{0}^{p}={\cal E} $. If this were not the case, there would
be a path in the graph associated with the matrix
$ \widetilde{G}_{0}=G_{0}\oplus G_{1} $, which has its length greater than
$ p $, and we would have $ q>p $.

Clearly, the condition $ G_{1}\otimes G_{0}^{p}={\cal E} $ results in
$ ({\cal T}_{k}\otimes G_{0}^{T})^{p}\otimes{\cal T}_{k}\otimes G_{1}^{T}
={\cal E} $, and thus we get
\begin{eqnarray*}
\lefteqn{(I\oplus{\cal T}_{k}\otimes\widetilde{G}_{0}^{T})^{q}
=((I\oplus{\cal T}_{k}\otimes G_{0}^{T})
\oplus{\cal T}_{k}\otimes G_{1}^{T})^{p}} \\
& & \geq
(I\oplus{\cal T}_{k}\otimes G_{0}^{T})^{p-1}\otimes{\cal T}_{k}
\otimes G_{1}^{T}
=
(I\oplus{\cal T}_{k}\otimes G_{0}^{T})^{p}\otimes{\cal T}_{k}
\otimes G_{1}^{T}.
\end{eqnarray*}
Since it holds
$ (I\oplus{\cal T}_{k}\otimes\widetilde{G}_{0}^{T})^{p}\otimes{\cal T}_{k}
\geq (I\oplus{\cal T}_{k}\otimes\widetilde{G}_{0}^{T})^{p} $, one can conclude
that inequality (\ref{e-13}) is also valid.
\qed
\end{proof}

\begin{theorem}\label{T-MON}
In the acyclic fork-join queueing network model (\ref{e-9}--\ref{e-11}),
reducing the initial numbers of customers from any finite values to zero does
not decrease the entries of the system state vector
$ \mbox{\boldmath $x$}(k) $ for all $ k=1,2,\ldots $.
\end{theorem}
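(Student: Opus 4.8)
The plan is to compare the state vector $\mbox{\boldmath $x$}(k)$ of the original network against the state vector $\widetilde{\mbox{\boldmath $x$}}(k)$ of the network obtained by reducing all finite $r_i$ to zero, and to prove entrywise that $\mbox{\boldmath $x$}(k)\le\widetilde{\mbox{\boldmath $x$}}(k)$ for every $k=1,2,\ldots$ by induction on $k$. As observed at the start of Section~\ref{S-MPR}, the reduction replaces the matrices by $\widetilde{G}_0=G_0\oplus\cdots\oplus G_M$ and $\widetilde{G}_m={\cal E}$ for $m\ge1$, so the reduced network obeys the one-step recursion $\widetilde{\mbox{\boldmath $x$}}(k)=\widetilde{A}(k)\otimes\widetilde{\mbox{\boldmath $x$}}(k-1)$ with $\widetilde{A}(k)=(I\oplus{\cal T}_k\otimes\widetilde{G}_0^T)^q\otimes{\cal T}_k$, exactly the matrix of Lemma~\ref{L-ALA}. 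Both networks start from the common data $x_i(0)=0$ and $x_i(k)=\varepsilon$ for $k<0$, which furnishes the base of the induction.

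Before the induction I would record two facts. First, the reduced trajectory is nondecreasing in its index: since $\tau_{ik}\ge0$ gives ${\cal T}_k\ge I$ and $(I\oplus X)^q\ge I$, we have $\widetilde{A}(k)\ge I$, hence $\widetilde{\mbox{\boldmath $x$}}(k-m)\le\widetilde{\mbox{\boldmath $x$}}(k-1)$ for all $m\ge1$. Second, I would establish the key bound $A_m(k)\le\widetilde{A}(k)$ for every $m=1,\ldots,M$. This is the natural extension of Lemma~\ref{L-ALA}, which is precisely this bound for the summand $m=1$, to the full reduced matrix in which $\widetilde{G}_0=G_0\oplus\cdots\oplus G_M$ rather than merely $G_0\oplus G_1$.

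Granting these two facts the inductive step is short. Assuming $\mbox{\boldmath $x$}(j)\le\widetilde{\mbox{\boldmath $x$}}(j)$ for all $j<k$ and using monotonicity of $\otimes$, I would chain $\mbox{\boldmath $x$}(k)=\bigoplus_{m=1}^{M}A_m(k)\otimes\mbox{\boldmath $x$}(k-m)\le\bigoplus_{m=1}^{M}A_m(k)\otimes\widetilde{\mbox{\boldmath $x$}}(k-m)\le\bigoplus_{m=1}^{M}\widetilde{A}(k)\otimes\widetilde{\mbox{\boldmath $x$}}(k-1)=\widetilde{A}(k)\otimes\widetilde{\mbox{\boldmath $x$}}(k-1)=\widetilde{\mbox{\boldmath $x$}}(k)$, where the middle inequality applies both preliminary facts termwise and the final collapse uses idempotency. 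This gives $\mbox{\boldmath $x$}(k)\le\widetilde{\mbox{\boldmath $x$}}(k)$ and closes the induction, proving the claim.

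The main obstacle is the generalized bound $A_m(k)\le\widetilde{A}(k)$ for $m\ge2$. I would expand $A_m(k)=\bigoplus_{s=0}^{p}({\cal T}_k\otimes G_0^T)^s\otimes{\cal T}_k\otimes G_m^T$ and argue summand by summand: a nonzero summand forces $G_m\otimes G_0^{s}\ne{\cal E}$, i.e.\ a path of $s+1$ arcs in the graph of $\widetilde{G}_0$, whence its longest path obeys $q\ge s+1$; that summand is then dominated by the degree-$(s+1)$ term of $(I\oplus{\cal T}_k\otimes\widetilde{G}_0^T)^q$, and therefore by $(I\oplus{\cal T}_k\otimes\widetilde{G}_0^T)^q\otimes{\cal T}_k=\widetilde{A}(k)$ after appending ${\cal T}_k\ge I$ on the right. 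This per-term bookkeeping is the delicate point; it is the exact analogue of the $q=p$ versus $q>p$ case distinction already carried out in the proof of Lemma~\ref{L-ALA}, the subtlety being that the path-length slack in $\widetilde{G}_0$ must be charged correctly when the higher-order arcs of $G_m$ are merged into level zero.
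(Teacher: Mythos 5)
Your proof is correct, and its skeleton --- comparing $\mbox{\boldmath $x$}(k)$ with the trajectory $\widetilde{\mbox{\boldmath $x$}}(k)$ of the one-step reduced system $\widetilde{\mbox{\boldmath $x$}}(k)=\widetilde{A}(k)\otimes\widetilde{\mbox{\boldmath $x$}}(k-1)$ and inducting on $k$ --- is the same as the paper's; where you diverge is in the two supporting facts. The paper uses monotonicity of the \emph{original} trajectory ($\mbox{\boldmath $x$}(k-m)\leq\mbox{\boldmath $x$}(k-1)$) to collapse the multi-delay recursion onto delay one, then notes that
$\bigoplus_{m=1}^{M}A_{m}(k)
=(I\oplus{\cal T}_{k}\otimes G_{0}^{T})^{p}\otimes{\cal T}_{k}\otimes\bigl(I\oplus\bigoplus_{m=1}^{M}G_{m}^{T}\bigr)$
has exactly the shape of $A_{1}(k)$ in Lemma~\ref{L-ALA}, so one application of that lemma with $G_{1}$ replaced by $\bigoplus_{m\geq1}G_{m}$ gives $\bigoplus_{m}A_{m}(k)\leq\widetilde{A}(k)$. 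You instead use monotonicity of the \emph{reduced} trajectory (correctly justified via $\widetilde{A}(k)\geq{\cal T}_{k}\geq I$) and bound each $A_{m}(k)\leq\widetilde{A}(k)$ separately by a per-summand path-length argument: a nonvanishing summand $({\cal T}_{k}\otimes G_{0}^{T})^{s}\otimes{\cal T}_{k}\otimes G_{m}^{T}$ forces $G_{m}\otimes G_{0}^{s}\neq{\cal E}$, hence a path of $s+1$ arcs in the graph of $\widetilde{G}_{0}$, hence $q\geq s+1$ and domination by $({\cal T}_{k}\otimes\widetilde{G}_{0}^{T})^{s+1}\leq(I\oplus{\cal T}_{k}\otimes\widetilde{G}_{0}^{T})^{q}\leq\widetilde{A}(k)$. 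This is sound, and it in effect re-proves (and slightly generalizes) Lemma~\ref{L-ALA}; it is arguably cleaner, since the uniform observation $q\geq s+1$ on nonvanishing summands replaces the lemma's case split between $q>p$ and $q=p$. What you lose is economy: the paper gets the key inequality for free from the already-proved lemma, whereas your route redoes that work --- though in a self-contained way that makes the monotonicity theorem independent of the lemma.
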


\begin{proof}
Let $ \mbox{\boldmath $x$}(k) $ be determined by (\ref{e-9}--\ref{e-11}).
Suppose that the vector $ \widetilde{\mbox{\boldmath $x$}}(k) $ satisfies
the dynamic equation
$$
\widetilde{\mbox{\boldmath $x$}}(k)
=\widetilde{A}(k)\otimes\widetilde{\mbox{\boldmath $x$}}(k-1)
$$
with
$$
\widetilde{A}(k)
=\left(I\oplus{\cal T}_{k}\otimes\bigoplus_{m=0}^{M}G_{m}^{T}\right)^{q}
\otimes{\cal T}_{k}
=(I\oplus{\cal T}_{k}\otimes G^{T})^{q}\otimes{\cal T}_{k},
$$
where $ q $ is the length of the longest path in the graph associated with
the matrix $ G=G_{0}\oplus G_{1}\oplus\cdots\oplus G_{m} $.

Now we have to show that for all $ k=1,2,\ldots $, it holds
$$
\mbox{\boldmath $x$}(k)\leq\widetilde{\mbox{\boldmath $x$}}(k).
$$

Since $ \mbox{\boldmath $x$}(k_{1}) \leq \mbox{\boldmath $x$}(k_{2}) $
for any $ k_{1}<k_{2} $, we have from (\ref{e-9})
$$
\mbox{\boldmath $x$}(k)
=\bigoplus_{m=1}^{M} A_{m}(k)\otimes\mbox{\boldmath $x$}(k-m)
\leq\left(\bigoplus_{m=1}^{M}A_{m}(k)\right)\otimes\mbox{\boldmath $x$}(k-1).
$$

Consider the matrix
$$
\widetilde{A}_{1}(k)=\bigoplus_{m=1}^{M}A_{m}(k)
=(I\oplus{\cal T}_{k}\otimes G_{0}^{T})^{p}\otimes{\cal T}_{k}
  \otimes\left(I\oplus \bigoplus_{m=1}^{M}G_{m}^{T}\right).
$$
By applying Lemma~\ref{L-ALA}, we have
$$
\widetilde{A}_{1}(k)
\leq\left(I\oplus{\cal T}_{k}\otimes G_{0}^{T}
\oplus\bigoplus_{m=1}^{M}G_{m}^{T}\right)^{q}\otimes{\cal T}_{k}
=\widetilde{A}(k).
$$

Starting with the condition
$ \mbox{\boldmath $x$}( 0)=\widetilde{\mbox{\boldmath $x$}}(0) $, we
successively verify that the relations
$$
\mbox{\boldmath $x$}(k)
\leq
\widetilde{A}_{1}(k)\otimes\mbox{\boldmath $x$}(k -1)
\leq
\widetilde{A}(k)\otimes\mbox{\boldmath $x$}(k-1)
\leq
\widetilde{A}(k)\otimes\widetilde{\mbox{\boldmath $x$}}(k-1)
=
\widetilde{\mbox{\boldmath $x$}}(k)
$$
are valid for each $ k=1,2,\ldots $.
\qed
\end{proof}

\section{Bounds on the Service Cycle Completion Time}\label{S-BSC}
We consider the evolution of the system as a sequence of service cycles:
the 1st cycle starts at the initial time, and it is terminated as soon as all
the servers in the network complete their 1st service, the 2nd cycle is
terminated as soon as the servers complete their 2nd service, and so on.
Clearly, the completion time of the $k$th cycle can be represented as
$$
\max_{i}x_{i}(k)=\|\mbox{\boldmath $x$}(k)\|.
$$

The next lemma provides simple lower and upper bounds for the $k$th cycle
completion time.
\begin{lemma}\label{L-TXT}
For all $ k=1,2,\ldots $, it holds
$$
\left\|\sum_{i=1}^{k}{\cal T}_{i}\right\|
\le
\|\mbox{\boldmath $x$}(k) \|
\le
\sum_{i=1}^{k}\|{\cal T}_{i}\|
+ p\left(\bigoplus_{i=1}^{k}\|{\cal T}_{i}\|\right).
$$
\end{lemma}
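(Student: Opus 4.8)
The plan is to prove the two inequalities separately: the lower bound by a direct induction on the state equation~(\ref{e-9}), and the upper bound by first passing to the monotone majorant furnished by Theorem~\ref{T-MON}. For the lower bound I would first observe that each transition matrix dominates the diagonal service matrix. Indeed, $(I\oplus{\cal T}_{k}\otimes G_{0}^{T})^{p}\geq I$ and $I\oplus G_{1}^{T}\geq I$, so the factorisation~(\ref{e-10}) gives $A_{1}(k)\geq{\cal T}_{k}$. Keeping only the $m=1$ term in~(\ref{e-9}) then yields $\mbox{\boldmath $x$}(k)\geq{\cal T}_{k}\otimes\mbox{\boldmath $x$}(k-1)$, and an easy induction starting from $\mbox{\boldmath $x$}(0)=\mbox{\boldmath $0$}$ produces $\mbox{\boldmath $x$}(k)\geq{\cal T}_{k}\otimes\cdots\otimes{\cal T}_{1}\otimes\mbox{\boldmath $0$}$. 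Since the ${\cal T}_{i}$ are diagonal, the product ${\cal T}_{1}\otimes\cdots\otimes{\cal T}_{k}$ is the diagonal matrix whose $j$th entry is $\tau_{j1}+\cdots+\tau_{jk}$, so passing to the norm gives exactly $\|\sum_{i=1}^{k}{\cal T}_{i}\|\leq\|\mbox{\boldmath $x$}(k)\|$ (the ordinary and $(\max,+)$ sums of diagonal matrices coinciding on the diagonal).

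For the upper bound I would invoke Theorem~\ref{T-MON} to replace $\mbox{\boldmath $x$}(k)$ by the larger vector $\widetilde{\mbox{\boldmath $x$}}(k)=\widetilde{A}(k)\otimes\cdots\otimes\widetilde{A}(1)\otimes\mbox{\boldmath $0$}$ governed by $\widetilde{A}(i)=(I\oplus{\cal T}_{i}\otimes G^{T})^{q}\otimes{\cal T}_{i}$, where $G=G_{0}\oplus\cdots\oplus G_{M}$ and $q$ is the length of the longest path in the full network graph (the quantity meant by $p$ in the statement). As multiplying a matrix by the zero vector leaves its norm unchanged, it suffices to bound $\|\widetilde{A}(k)\otimes\cdots\otimes\widetilde{A}(1)\|$. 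The key manipulation is to convert each diagonal factor ${\cal T}_{i}$ into the scalar $\|{\cal T}_{i}\|$: Proposition~\ref{P-XXG} gives ${\cal T}_{i}\leq\|{\cal T}_{i}\|\otimes I$ (the graph of a diagonal matrix being $I$), whence $\widetilde{A}(i)\leq\|{\cal T}_{i}\|\otimes(I\oplus\|{\cal T}_{i}\|\otimes G^{T})^{q}$. Because scalars commute with every matrix, the whole product then separates as $(\bigotimes_{i}\|{\cal T}_{i}\|)\otimes\bigotimes_{i}(I\oplus\|{\cal T}_{i}\|\otimes G^{T})^{q}$; the first factor contributes $\sum_{i}\|{\cal T}_{i}\|$ to the norm, and Lemma~\ref{L-IXX}, applied to the matrices $\|{\cal T}_{i}\|\otimes G^{T}$ (which share the acyclic graph of $G^{T}$ and satisfy $\|\,\|{\cal T}_{i}\|\otimes G^{T}\|=\|{\cal T}_{i}\|\geq0$), bounds the norm of the second by $(\bigoplus_{i}\|{\cal T}_{i}\|)^{q}=q(\bigoplus_{i}\|{\cal T}_{i}\|)$. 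Adding the two contributions gives the claimed upper bound.

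The main obstacle is precisely this decoupling in the upper bound. In $\widetilde{A}(k)\otimes\cdots\otimes\widetilde{A}(1)$ the powers $(I\oplus{\cal T}_{i}\otimes G^{T})^{q}$ and the diagonal matrices ${\cal T}_{i}$ are interleaved, whereas Lemma~\ref{L-IXX} applies only to a single $\otimes$-product of terms $(I\oplus X_{i})$ sharing one acyclic graph. Bounding the product factor by factor would cost an extra $\|{\cal T}_{i}\|$ at every step and destroy the estimate, so the reduction of ${\cal T}_{i}$ to the commuting scalar $\|{\cal T}_{i}\|$ via Proposition~\ref{P-XXG} is exactly what makes the two groups of factors separable and lets the acyclicity be exploited once, globally, rather than $k$ times. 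A secondary point to settle is the identification of the path length: the passage through Theorem~\ref{T-MON} naturally produces the longest path $q$ of the full graph $G$, which must be read as the $p$ appearing in the statement.
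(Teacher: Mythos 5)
Your proof is correct and takes essentially the same route as the paper: the lower bound via $A_{1}(k)\geq{\cal T}_{k}$ and iteration of the state equation from $\mbox{\boldmath $x$}(0)=\mbox{\boldmath $0$}$, and the upper bound via the majorant system of Theorem~\ref{T-MON}, extraction of the commuting scalars $\|{\cal T}_{i}\|$, and a single global application of Lemma~\ref{L-IXX} (the paper applies that lemma to the matrices ${\cal T}_{i}\otimes\widetilde{G}^{T}$ rather than to $\|{\cal T}_{i}\|\otimes G^{T}$, an immaterial variation). Your observation that the $q$ produced by this argument is what the statement calls $p$ also matches the paper's implicit identification.
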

\begin{proof}
To prove the left inequality first note that
$$
A_{1}(k)=(I\oplus{\cal T}_{k}\otimes G_{0}^{T})^{p}\otimes{\cal T}_{k}
\otimes(I \oplus G_{1}^{T}) \geq {\cal T}_{k}.
$$
With this condition, we have from (\ref{e-9})
$$
\mbox{\boldmath $x$}(k)=\bigoplus_{m=1}^{M}A_{m}(k)
\otimes\mbox{\boldmath $x$}(k-m)
\geq A_{1}(k)\otimes\mbox{\boldmath $x$}(k-1)
\geq {\cal T}_{k}\otimes\mbox{\boldmath $x$}(k-1).
$$

Now we can write
$$
\mbox{\boldmath $x$}(k)
\geq {\cal T}_{k}\otimes\mbox{\boldmath $x$}(k-1)
\geq {\cal T}_{k}\otimes{\cal T}_{k-1}\otimes\mbox{\boldmath $x$}(k-2)
\geq\cdots
\geq {\cal T}_{k}\otimes\cdots\otimes{\cal T}_{1}
\otimes\mbox{\boldmath $x$}(0),
$$
where $ \mbox{\boldmath $x$}(0)={\bf 0} $. Taking the norm, and considering
that $ {\cal T}_{i} $, $ i=1,\ldots,k $, present diagonal matrices, we get
$$
\|\mbox{\boldmath $x$}(k)\|
\ge
\|{\cal T}_{k}\otimes\cdots\otimes{\cal T}_{1}\|
=
\|{\cal T}_{1}+\cdots+{\cal T}_{k}\|.
$$

To obtain an upper bound, let us replace the general system
(\ref{e-9}--\ref{e-11}) with that governed by the equation
\begin{equation}\label{e-14}
\widetilde{\mbox{\boldmath $x$}}(k)
=\widetilde{A}(k)\otimes\widetilde{\mbox{\boldmath $x$}}(k-1)
\end{equation}
with $ \widetilde{A}(k)=(I\oplus{\cal T}_{k}
\otimes\widetilde{G}^{T})^{q}\otimes{\cal T}_{k} $, where
$ \widetilde{G}=G_{0}\oplus G_{1}\oplus\cdots\oplus G_{m} $, and $ q $
is the length of the longest path in the graph associated with
$ \widetilde{G} $. As it follows from Theorem~\ref{T-MON}, one has
$ \mbox{\boldmath $x$}(k) \leq\widetilde{\mbox{\boldmath $x$}}(k) $ for
all $ k=1,2,\ldots $.

Let us denote
$ \widetilde{A}_{k}=\widetilde{A}(k)\otimes\cdots\otimes\widetilde{A}(1) $.
With the condition
$ \widetilde{\mbox{\boldmath $x$}}(0)=\mbox{\boldmath $x$}( 0)={\bf 0} $, we
get from (\ref{e-14})
$$
\|\widetilde{\mbox{\boldmath $x$}}(k)\|
=\|\widetilde{A}(k)\otimes\cdots\otimes\widetilde{A}(1)\|
=\|\widetilde{A}_{k}\|.
$$

With Proposition~\ref{P-XEE} we have
$$
\widetilde{A}_{k}
=
\bigotimes_{i=1}^{k}(I\oplus{\cal T}_{k-i+1}
\otimes\widetilde{G}^{T})^{q}\otimes{\cal T}_{k-i+1}
\le
\bigotimes_{i=1}^{k}\|{\cal T}_{i}\|
\otimes
\bigotimes_{i=1}^{k}(I\oplus{\cal T}_{k-i+1}
\otimes\widetilde{G}^{T})^{q}.
$$

Proceeding to the norm and using Lemma~\ref{L-IXX}, we arrive at the
inequality
$$
\|\widetilde{A}_{k}\|
\le
\bigotimes_{i=1}^{k}\|{\cal T}_{i}\|\otimes
\left(\bigoplus_{i=1}^{k}\|{\cal T}_{i}\|\right)^{q}
=
\sum_{i=1}^{k}\|{\cal T}_{i}\|
+q\left(\bigoplus_{i=1}^{k}\|{\cal T}_{i}\|\right).
$$
which provides us with the desired result.
\qed
\end{proof}

\section{Stochastic Extension of the Network Model}\label{S-SEN}

Suppose that for each node $ i=1,\ldots,n $, the service times
$ \tau_{i1},\tau_{i2},\ldots $, form a sequence of independent and
identically distributed (i.i.d.) non-negative random variables with
$ \mathbb{E}[\tau_{ik}]<\infty $ and
$ \mathbb{D}[\tau_{ik}]<\infty $ for all $ k=1,2,\ldots $.

As a performance measure of the stochastic network model, we consider
the mean cycle time which is defined as
\begin{equation}\label{e-G}
\gamma=\lim_{k\to\infty}\frac{1}{k}\|\mbox{\boldmath $x$}(k)\|
\end{equation}
provided that the above limit exists. Another performance measure of interest
is the throughput defined as $ \pi=1/\gamma $.

Since it is frequently rather difficult to evaluate the mean cycle time
exactly, even though the network under study is quite simple, one can try to
derive bounds on $ \gamma $. In this section, we show how these bounds may
be obtained based on (max,+)-algebra representation of the network dynamics.

We start with some preliminary results which include properties of the
expectation operator, formulated in terms of (max,+)-algebra operations.

\subsection{Some Properties of Expectation}

Let $ \xi_{1},\ldots,\xi_{k} $ be random variables taking their values in
$ \underline{\mathbb{R}} $, and such that their expected values
$ \mathbb{E}[\xi_{i}] $, $ i=1,\ldots,k $, exist.

First note that ordinary properties of expectation leads us to the obvious
relations
$$
\mathbb{E}\left[\bigoplus_{i=1}^{k}\xi_{i}\right]
\leq\bigotimes_{i=1}^{k}\mathbb{E}[\xi_{i}],
\qquad
\mbox{and}\qquad
\mathbb{E}\left[\bigotimes_{i=1}^{k}\xi_{i}\right]
=\bigotimes_{i=1}^{k}\mathbb{E}[\xi_{i}].
$$

Furthermore, the next statement is valid.
\begin{lemma}\label{L-EEE}
It holds
$$
\mathbb{E}\left[\bigoplus_{i=1}^{k}\xi_{i}\right]
\geq \bigoplus_{i=1}^{k}\mathbb{E}[\xi_{i}].
$$
\end{lemma}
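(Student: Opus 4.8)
The plan is to reduce the claim to the elementary fact that the expectation operator is monotone, exactly as in the classical inequality $\mathbb{E}[\max_i \xi_i] \geq \max_i \mathbb{E}[\xi_i]$, since in $\mathbb{R}_{\max}$ the sum $\bigoplus$ is nothing but the maximum. So this lemma is just a transcription of ``the expectation of a maximum dominates the maximum of expectations'' into (max,+) notation, and it complements the reverse-type bound $\mathbb{E}[\bigoplus_i \xi_i] \leq \bigotimes_i \mathbb{E}[\xi_i]$ recorded immediately above in the text.

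First I would fix an arbitrary index $j \in \{1,\ldots,k\}$ and record the pointwise (almost-sure) inequality $\bigoplus_{i=1}^k \xi_i \geq \xi_j$, which holds simply because the maximum of a finite family dominates each of its members. Then I would apply monotonicity of ordinary expectation to this inequality to obtain $\mathbb{E}[\bigoplus_{i=1}^k \xi_i] \geq \mathbb{E}[\xi_j]$, noting that the left-hand side is a single fixed quantity not depending on $j$.

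Finally, since the resulting bound holds for every $j$, I take $\bigoplus_{j=1}^k$ (that is, the maximum over $j$) on the right-hand side. This leaves the left-hand side unchanged and yields $\mathbb{E}[\bigoplus_{i=1}^k \xi_i] \geq \bigoplus_{j=1}^k \mathbb{E}[\xi_j]$, which is precisely the assertion of the lemma.

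The only real subtlety is bookkeeping around the null value $\varepsilon = -\infty$ and the hypothesis that each $\mathbb{E}[\xi_i]$ exists: even if some $\xi_i$ takes the value $\varepsilon$, the inequality $\bigoplus_i \xi_i \geq \xi_j$ remains valid in $\underline{\mathbb{R}}$, and monotonicity of expectation is unaffected because the standing assumptions guarantee that the relevant expectations are well defined. Beyond this there is no genuine obstacle — the proof is a one-line monotonicity argument — so I do not expect a hard step, only care in phrasing the passage from the $\otimes$/$\oplus$ symbols to the underlying order on $\underline{\mathbb{R}}$.
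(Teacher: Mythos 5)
Your proof is correct, but it takes a genuinely different route from the paper. You argue by pure monotonicity: the maximum dominates each member pointwise, so the expectation of the maximum dominates each individual expectation, and then you take the maximum over the free index. The paper instead proves the case $k=2$ from the algebraic identity
$$
x\oplus y=\frac{1}{2}\left(x+y+|x-y|\right), \quad x,y\in\mathbb{R},
$$
combined with the fact that $\mathbb{E}|\eta|\geq|\mathbb{E}[\eta]|$, and then extends to general $k$ by induction. Your argument is the more elementary of the two: it needs no induction, no identity, and, as you note, it passes through values $\varepsilon=-\infty$ without any modification, whereas the paper's identity is stated only for finite reals and so its proof implicitly requires handling the $\varepsilon$ case separately (or observing that terms equal to $\varepsilon$ can be discarded from the maximum). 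What the paper's approach buys is a reduction of the max-inequality to a convexity-type fact about the absolute value, which fits the algebraic spirit of the surrounding (max,+) manipulations; but as a matter of economy and rigor at the boundary value $\varepsilon$, your direct monotonicity argument is at least as good, and arguably cleaner.
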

\begin{proof}
The statement of the lemma for $ k=2 $ follows immediately from the
identity
$$
x\oplus y=\frac{1}{2}(x+y+|x-y|), \quad \mbox{for all
$ x,y\in\mathbb{R}$}
$$
and ordinary properties of expectation. It remains to extend the statement to
the case of arbitrary $ k $ by induction.
\qed
\end{proof}

The next result \cite{Gumbel1954Themaxima,Hartley1954Universal} provides an upper
bound for the expected value of the maximum of i.i.d. random variables.
\begin{lemma}\label{L-GDH}
Let $ \xi_{1},\ldots,\xi_{k} $ be i.i.d. random variables with
$ \mathbb{E}[\xi_{1}]<\infty $ and
$ \mathbb{D}[\xi_{1}]<\infty $. Then it holds
$$
\mathbb{E}\left[\bigoplus_{i=1}^{k}\xi_{i}\right]
\le
\mathbb{E}[\xi_{1}]+\frac{k-1}{\sqrt{2k-1}}\sqrt{\mathbb{D}[\xi_{1}]}.
$$
\end{lemma}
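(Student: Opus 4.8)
The plan is to pass from the expectation of the maximum to an integral against the common quantile function, and then apply the Cauchy--Schwarz inequality so that the variance $ \mathbb{D}[\xi_{1}] $ appears naturally on the right. Write $ F $ for the common distribution function of the $ \xi_{i} $ and $ Q=F^{-1} $ for its quantile (generalized inverse) function, and set $ \mu=\mathbb{E}[\xi_{1}] $, $ \sigma^{2}=\mathbb{D}[\xi_{1}] $. Since the $ \xi_{i} $ are i.i.d., the maximum $ \bigoplus_{i=1}^{k}\xi_{i} $ has distribution function $ F^{k} $, so that
$$
\mathbb{E}\left[\bigoplus_{i=1}^{k}\xi_{i}\right]
=k\int x\,F(x)^{k-1}\,dF(x)
=k\int_{0}^{1}Q(u)\,u^{k-1}\,du,
$$
the last equality by the substitution $ u=F(x) $. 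In the same notation $ \mu=\int_{0}^{1}Q(u)\,du $ and $ \sigma^{2}=\int_{0}^{1}\bigl(Q(u)-\mu\bigr)^{2}\,du $.

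Next I would subtract the mean. Introduce the weight $ w(u)=k\,u^{k-1}-1 $, and note that $ \int_{0}^{1}w(u)\,du=1-1=0 $. Consequently,
$$
\mathbb{E}\left[\bigoplus_{i=1}^{k}\xi_{i}\right]-\mu
=\int_{0}^{1}Q(u)\,w(u)\,du
=\int_{0}^{1}\bigl(Q(u)-\mu\bigr)\,w(u)\,du,
$$
where the second equality uses precisely that $ w $ integrates to zero, allowing the recentring of $ Q $ by $ \mu $. Applying Cauchy--Schwarz,
$$
\int_{0}^{1}\bigl(Q(u)-\mu\bigr)\,w(u)\,du
\le\left(\int_{0}^{1}\bigl(Q(u)-\mu\bigr)^{2}\,du\right)^{1/2}
\left(\int_{0}^{1}w(u)^{2}\,du\right)^{1/2}
=\sigma\left(\int_{0}^{1}w(u)^{2}\,du\right)^{1/2}.
$$
It then remains to evaluate the constant, a routine computation giving
$$
\int_{0}^{1}\bigl(k\,u^{k-1}-1\bigr)^{2}\,du
=\frac{k^{2}}{2k-1}-2+1
=\frac{(k-1)^{2}}{2k-1},
$$
whence $ \mathbb{E}\bigl[\bigoplus_{i}\xi_{i}\bigr]-\mu\le\sigma\,(k-1)/\sqrt{2k-1} $, which is the claim with $ \mu=\mathbb{E}[\xi_{1}] $ and $ \sigma=\sqrt{\mathbb{D}[\xi_{1}]} $.

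The step I expect to be the crux is not any single calculation but obtaining the \emph{sharp} constant. A cruder route --- the deterministic inequality $ \max_{i}x_{i}-\bar{x}\le\sqrt{k-1}\,\bigl(\tfrac{1}{k}\sum_{i}(x_{i}-\bar{x})^{2}\bigr)^{1/2} $ followed by taking expectations with $ \mathbb{E}\bigl[\sum_{i}(\xi_{i}-\bar{\xi})^{2}\bigr]=(k-1)\sigma^{2} $ --- yields only the weaker constant $ (k-1)/\sqrt{k} $. The improvement to $ \sqrt{2k-1} $ hinges on the quantile representation together with the observation that the weight $ k\,u^{k-1}-1 $ has zero integral, so that Cauchy--Schwarz is applied against the centred quantile and the right-hand side collapses exactly to $ \sigma $. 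A minor technical point to address is the use of the generalized inverse when $ F $ is not strictly increasing, and the finiteness of the integrals involved, which is guaranteed by $ \mathbb{D}[\xi_{1}]<\infty $.
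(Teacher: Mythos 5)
Your proof is correct. Note that the paper itself offers no proof of this lemma at all: it is quoted as a known result with citations to Gumbel (1954) and Hartley--David (1954), and your argument --- the quantile representation $\mathbb{E}\bigl[\bigoplus_{i}\xi_{i}\bigr]=\int_{0}^{1}Q(u)\,k\,u^{k-1}\,du$, recentring against the zero-mean weight $k\,u^{k-1}-1$, Cauchy--Schwarz, and the evaluation $\int_{0}^{1}(k\,u^{k-1}-1)^{2}\,du=(k-1)^{2}/(2k-1)$ --- is precisely the classical proof from those references, so there is nothing to contrast. The one point worth tightening is the opening identity: for a general (possibly discontinuous) $F$ the Stieltjes manipulation $k\int x\,F(x)^{k-1}\,dF(x)$ is not quite legitimate, but the representation you need follows cleanly from writing $\xi_{i}\overset{d}{=}Q(U_{i})$ jointly for i.i.d. uniforms $U_{i}$ and using monotonicity of $Q$ to get $\bigoplus_{i}Q(U_{i})=Q\bigl(\bigoplus_{i}U_{i}\bigr)$, after which the density $k\,u^{k-1}$ of $\bigoplus_{i}U_{i}$ gives the integral directly; this also disposes of the generalized-inverse caveat you flagged.
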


Consider a random matrix $ X $ with its entries $ x_{ij} $ taking
values in $ \underline{\mathbb{R}} $. We denote by $ \mathbb{E}[X] $
the matrix obtained from $ X $ by replacing each entry $ x_{ij} $ by
its expected value $ \mathbb{E}[x_{ij}] $.

\begin{lemma}\label{L-EXE}
It holds
$$
\mathbb{E}\|X\|\geq\|\mathbb{E}[X]\|.
$$
\end{lemma}

\begin{proof}
It follows from Lemma~\ref{L-EEE} that
$$
\mathbb{E}\|X\|=\mathbb{E}\left[\bigoplus_{i,j}x_{ij}\right]
\geq \bigoplus_{i,j}\mathbb{E}[x_{ij}]=\|\mathbb{E}[X]\|.
$$
\qed
\end{proof}

\subsection{Existence of the Mean Cycle Time}

In the analysis of the mean cycle time, one first has to convince himself that
the limit at (\ref{e-G}) exists. As a standard tool to verify the existence of
the above limit, the next theorem proposed in \cite{Kingman1973Subadditive} is normally
applied. One can find examples of the implementation of the theorem in the
$(\max,+)$-algebra framework in \cite{Baccelli1991Estimates,Baccelli1993Synchronization}.

\begin{theorem}\label{T-KIN}
Let $ \{\xi_{lk} | \; l,k=0,1,\ldots; \, l<k \} $ be a family of random
variables which satisfy the following properties:

Subadditivity: $ \xi_{lk} \leq \xi_{lm} + \xi_{mk} $ for all
$ l<m<k $;

Stationarity: both families $ \{\xi_{l+1 k+1} | \; l<k \} $ and
$ \{\xi_{lk} | \; l<k \} $ have the same joint distributions;

Boundedness: for all $ k=1,2,\ldots $, there exists
$ \mathbb{E}[\xi_{0k}] \geq -ck $ for some finite number $ c $.

Then there exists a constant $ \gamma $, such that it holds
\begin{enumerate}
\item $ \displaystyle{\lim_{k\rightarrow\infty} \xi_{0k}/k=\gamma} $
\quad with probability 1,
\item $ \displaystyle{\lim_{k\rightarrow\infty}
\mathbb{E}[\xi_{0k}]/k=\gamma}$.
\end{enumerate}
\end{theorem}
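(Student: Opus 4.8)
This is Kingman's subadditive ergodic theorem, so in the paper itself I would simply invoke it by citation; the sketch below is how I would organise a proof if forced to reconstruct one. The overall strategy splits the two assertions: first deduce the convergence of the normalised expectations (assertion~2) from a purely deterministic subadditivity argument, and then obtain the almost-sure limit (assertion~1), linking the two through the same constant $\gamma$. Throughout I would realise the family $\{\xi_{lk}\}$ on a dynamical system: by the stationarity hypothesis there is a measure-preserving map $T$ with $\xi_{lk}=\xi_{0,k-l}\circ T^{l}$, so that, writing $f_{n}=\xi_{0n}$, subadditivity becomes the cocycle inequality $f_{m+n}\le f_{m}+f_{n}\circ T^{m}$ and boundedness reads $\mathbb{E}[f_{n}]\ge -cn$. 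Iterating gives $f_{n}\le\bigoplus$-free ordinary sum $\sum_{j=0}^{n-1}f_{1}\circ T^{j}$, whence each $f_{n}$ is integrable.

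For assertion~2, set $a_{n}=\mathbb{E}[f_{n}]$. Stationarity yields $\mathbb{E}[\xi_{m,m+n}]=a_{n}$, so the cocycle inequality gives the scalar subadditivity $a_{m+n}\le a_{m}+a_{n}$. Fekete's lemma then provides $a_{n}/n\to\inf_{n}a_{n}/n=:\gamma$, and the boundedness hypothesis forces $\gamma\ge -c>-\infty$. This is exactly the claimed convergence of $\mathbb{E}[\xi_{0k}]/k$.

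For the almost-sure statement I would bound $\limsup_{n}f_{n}/n$ and $\liminf_{n}f_{n}/n$ separately. The upper bound is the easier half: fixing $m$ and writing $n=qm+r$, the block estimate $f_{qm}\le\sum_{j=0}^{q-1}f_{m}\circ T^{jm}$ together with Birkhoff's ergodic theorem applied to $f_{m}$ under the measure-preserving map $T^{m}$ controls the main term, while the $o(n)$ remainder $f_{r}\circ T^{qm}$ is disposed of using integrability of $f_{1}$; since $\limsup_{n}f_{n}/n$ is readily seen to be $T$-invariant (from $f_{n}\le f_{1}+f_{n-1}\circ T$), taking expectations and then $\inf_{m}$ yields $\limsup_{n}f_{n}/n\le\gamma$.

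The main obstacle is the matching lower bound $\liminf_{n}f_{n}/n\ge\gamma$ almost surely, which carries essentially all the difficulty of the theorem. Here I would follow a modern argument (in the spirit of Katznelson--Weiss or Liggett) rather than Kingman's original proof: one shows $\mathbb{E}[\liminf_{n}f_{n}/n]\ge\gamma$ by a stopping-time/covering decomposition that splits $f_{n}$ into an additive Birkhoff part plus a nonnegative superadditive remainder, and then combines this with the upper bound. Since $\liminf\le\limsup$ while $\mathbb{E}[\limsup_{n}f_{n}/n]\le\gamma\le\mathbb{E}[\liminf_{n}f_{n}/n]$, the two extremes coincide almost surely and equal $\gamma$. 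A final point I would flag is that identifying the limit as a genuine \emph{constant}, rather than as the invariant random variable $\lim_{n}\mathbb{E}[\xi_{0n}\mid\mathcal{I}]/n$, requires the underlying stationary structure to be ergodic; under bare stationarity the theorem only yields an invariant limiting random variable whose expectation is $\gamma$.
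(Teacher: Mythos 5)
The paper offers no proof of this statement at all: it is Kingman's subadditive ergodic theorem, imported verbatim by citation from \cite{Kingman1973Subadditive} and used as a black box to establish existence of the mean cycle time, so your decision to invoke it by citation is exactly the paper's approach, and your reconstruction sketch (Fekete's lemma for the expectations, Birkhoff plus a Katznelson--Weiss/Liggett-type argument for the almost-sure limit) is a standard, sound outline of how the result is proved in the literature. Your closing caveat is also well taken: under bare stationarity the theorem as stated only yields an invariant limiting random variable rather than a constant, and constancy of $\gamma$ is legitimate in the paper's setting only because the transition matrices $A(1),A(2),\ldots$ there are i.i.d., which makes the underlying shift ergodic.
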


For simplicity, we examine the existence of the mean cycle time for a network
with the maximum of the initial numbers of customers in nodes $ M\leq1 $. As
it follows from representation (\ref{e-9}--\ref{e-11}), the dynamics of the
system may be described by the equation
$$
\mbox{\boldmath $x$}(k)=A(k)\otimes\mbox{\boldmath $x$}(k-1)
$$
with the matrix $ A(k)=A_{1}(k) $ determined by (\ref{e-10}).
Clearly, in the case of $ M>1 $, a similar representation can be easily
obtained by going to an extended model with a new state vector which combines
several consecutive state vectors of the original system.

To prove the existence of the mean cycle time, first note that
$ \tau_{ik} $ with $ k=1,2,\ldots $, are i.i.d. random variables for
each $ i=1,\ldots,n $, and consequently, $ {\cal T}_{k} $ are i.i.d.
random matrices, whereas $ \|{\cal T}_{k}\| $ present i.i.d. random
variables with $ \mathbb{E}\|{\cal T}_{k}\|<\infty $ and
$ \mathbb{D}\|{\cal T}_{k}\|<\infty $ for all $ k=1,2,\ldots $.

Furthermore, since the matrix $ A(k) $ depends only on $ {\cal T}_{k} $,
the matrices $ A(1),A(2),\ldots $, also present i.i.d. random matrices. It
is easy to verify that $ 0\leq\mathbb{E}\|A(k)\|<\infty $ for all
$ k=1,2,\ldots $.

In order to apply Theorem~\ref{T-KIN} to stochastic system (\ref{e-9}) with
transition matrix (\ref{e-10}), one can define the family of random variables
$ \{\xi_{lk} | \; l<k \} $ with
$$
\xi_{lk}=\|A(k)\otimes\cdots\otimes A(l+1)\|.
$$

Since $ A(i) $, $ i=1,2,\ldots $, present i.i.d. random matrices, the
family $ \{\xi_{lk}|\;l<k\} $ satisfies the stationarity condition of
Theorem~\ref{T-KIN}. Furthermore, the multiplicative property of the norm
endows the family with subadditivity. The boundedness condition can be readily
verified based on the condition that
$ 0\leq\mathbb{E}[\tau_{ik}]<\infty $ for all $ i=1,\ldots,n $, and
$ k=1,2,\ldots $.

\subsection{Calculating Bounds on the Mean Cycle Time}
Now we are in a position to present our main result which offers bounds on the
mean cycle time.
\begin{theorem}\label{T-TGT}
In the stochastic dynamical system (\ref{e-9}) the mean cycle time
$ \gamma $ satisfies the double inequality
\begin{equation}\label{e-15}
\|\mathbb{E}[{\cal T}_{1}]\| \leq \gamma \leq
\mathbb{E}\|{\cal T}_{1}\|.
\end{equation}
\end{theorem}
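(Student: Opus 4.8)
The plan is to express $\gamma$ through \emph{expected} norms and then sandwich $\mathbb{E}\|\boldsymbol{x}(k)\|$ between the two deterministic estimates of Lemma~\ref{L-TXT}, passing to the limit. Since $\boldsymbol{x}(0)=\boldsymbol{0}$ is the identity vector, we have $\|\boldsymbol{x}(k)\|=\|A(k)\otimes\cdots\otimes A(1)\|=\xi_{0k}$ in the notation introduced for Theorem~\ref{T-KIN}; conclusion~2 of that theorem then gives $\gamma=\lim_{k\to\infty}\mathbb{E}\|\boldsymbol{x}(k)\|/k$. It therefore suffices to bound $\mathbb{E}\|\boldsymbol{x}(k)\|/k$ from both sides and let $k\to\infty$.

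For the lower bound I would take expectation in the left inequality of Lemma~\ref{L-TXT}, so that $\mathbb{E}\|\boldsymbol{x}(k)\|\ge\mathbb{E}\bigl\|\sum_{i=1}^{k}{\cal T}_{i}\bigr\|$. Lemma~\ref{L-EXE} converts this into $\mathbb{E}\bigl\|\sum_{i=1}^{k}{\cal T}_{i}\bigr\|\ge\bigl\|\sum_{i=1}^{k}\mathbb{E}[{\cal T}_{i}]\bigr\|$. Because the service times are identically distributed in $k$, each $\mathbb{E}[{\cal T}_{i}]$ equals $\mathbb{E}[{\cal T}_{1}]$, and since these matrices are diagonal the ordinary sum has diagonal entries $k\,\mathbb{E}[\tau_{j1}]$, so its norm is exactly $k\,\|\mathbb{E}[{\cal T}_{1}]\|$. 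Dividing by $k$ and taking the limit yields $\gamma\ge\|\mathbb{E}[{\cal T}_{1}]\|$.

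For the upper bound I would take expectation in the right inequality of Lemma~\ref{L-TXT}, giving
$$
\mathbb{E}\|\boldsymbol{x}(k)\|
\le
\sum_{i=1}^{k}\mathbb{E}\|{\cal T}_{i}\|
+p\,\mathbb{E}\Bigl[\bigoplus_{i=1}^{k}\|{\cal T}_{i}\|\Bigr].
$$
The first sum equals $k\,\mathbb{E}\|{\cal T}_{1}\|$ by identical distribution, and the second term is controlled by the Gumbel--Hartley estimate of Lemma~\ref{L-GDH}, applied to the i.i.d.\ variables $\|{\cal T}_{i}\|$, which have finite mean and variance. After dividing by $k$, the bound becomes $\mathbb{E}\|{\cal T}_{1}\|$ plus the two correction terms $\tfrac{p}{k}\mathbb{E}\|{\cal T}_{1}\|$ and $\tfrac{p(k-1)}{k\sqrt{2k-1}}\sqrt{\mathbb{D}\|{\cal T}_{1}\|}$.

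The main point to check is that both correction terms vanish as $k\to\infty$. The first is plainly $O(1/k)$, since $p$ is a fixed longest-path length independent of $k$. The second is the delicate one: observing that $\tfrac{k-1}{\sqrt{2k-1}}$ grows only like $\sqrt{k/2}$, the factor $\tfrac{p}{k}$ forces the whole term to decay like $1/\sqrt{k}$. This is the one quantitative step where the finite-variance hypothesis is genuinely used, guaranteeing that the stochastic spread of the per-cycle maxima is negligible on the linear scale that governs $\gamma$. With both corrections suppressed, passing to the limit gives $\gamma\le\mathbb{E}\|{\cal T}_{1}\|$, which together with the lower bound completes the double inequality~(\ref{e-15}).
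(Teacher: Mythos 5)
Your proof is correct and follows essentially the same route as the paper: the identification $\gamma=\lim_{k\to\infty}\mathbb{E}\|\mbox{\boldmath $x$}(k)\|/k$ via Theorem~\ref{T-KIN}, the lower bound from Lemma~\ref{L-TXT} combined with Lemma~\ref{L-EXE}, and the upper bound from Lemma~\ref{L-TXT} combined with Lemma~\ref{L-GDH}, with the correction terms vanishing in the limit. The only differences are cosmetic --- you spell out the $\xi_{0k}=\|A(k)\otimes\cdots\otimes A(1)\|$ identification and the $O(1/\sqrt{k})$ decay rate explicitly, which the paper leaves implicit.
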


\begin{proof}
Since Theorem~\ref{T-KIN} hods true, we may write
$$
\gamma=
\lim_{k\rightarrow\infty}\frac{1}{k}\mathbb{E}\|\mbox{\boldmath $x$}(k)\|.
$$

Let us first prove the left inequality in (\ref{e-15}). From
Lemmas~\ref{L-TXT} and \ref{L-EXE}, we have
$$
\frac{1}{k}\mathbb{E}\|\mbox{\boldmath $x$}(k)\|
\geq
\frac{1}{k}\mathbb{E}\left\|\sum_{i=1}^{k}{\cal T}_{i}\right\|
\geq
\left\|\frac{1}{k}\sum_{i=1}^{k}\mathbb{E}[{\cal T}_{i}]\right\|
=\|\mathbb{E}[{\cal T}_{1}]\|,
$$
independently of $ k $.

With the upper bound offered by Lemma~\ref{L-TXT}, we get
$$
\frac{1}{k}\mathbb{E}\|\mbox{\boldmath $x$}(k)\|
\leq
\mathbb{E}\|{\cal T}_{1}\|
+\frac{p}{k}\mathbb{E}\left[\bigoplus_{i=1}^{k}\|{\cal T}_{i}\|\right].
$$
From Lemma~\ref{L-GDH}, the second term on the right-hand side may be replaced
by that of the form
$$
\frac{p}{k}\left(\mathbb{E}\|{\cal T}_{1}\|
+\frac{k-1}{\sqrt{2k-1}}\sqrt{\mathbb{D}\|{\cal T}_{1}\|}\right),
$$
which tends to $0$ as $ k\rightarrow\infty $.
\qed
\end{proof}

\section{Discussion and Examples}\label{S-DEX}

Now we discuss the behaviour of the bounds (\ref{e-15}) under various
assumptions concerning the service times in the network. First note that the
derivation of the bounds does not require the $k$th service times
$ \tau_{ik} $ to be independent for all $ i=1,\ldots,n $. As it is easy
to see, if $ \tau_{ik}=\tau_{k} $ for all $ i $, we have
$ \|\mathbb{E}[{\cal T}]_{1}\|=\mathbb{E}\|{\cal T}_{1}\| $, and
so the lower and upper bound coincide.

To show how the bounds vary with strengthening the dependency, we consider the
network with $ n=5 $ nodes, depicted in Fig.~\ref{F-AFJ}. Let
$ \tau_{i1}=\sum_{j=1}^{5}a_{ij}\xi_{j1} $, where $ \xi_{j1} $,
$ j=1,\ldots,5 $, are i.i.d. random variables with the exponential
distribution of mean $ 1 $, and
$$
a_{ij}
=\left\{\begin{array}{ll}
    a,                & \mbox{if $ i=j$}, \\
    \frac{1}{4}(1-a), & \mbox{if $ i\ne j$},
 \end{array}\right.
$$
where $ a $ is a number such that $ 1\leq a\leq 1/5 $.

It is evident that for $ a=1 $, one has $ \tau_{i1}=\xi_{i1} $, and
then $ \tau_{i1} $, $ i=1,\ldots,5 $, present independent random
variables. As $ a $ decreases, the service times $ \tau_{i1} $ become
dependent, and with $ a=1/5 $, we will have
$ \tau_{i1}=(\xi_{11}+\cdots+\xi_{51})/5 $ for all $ i=1,\ldots,5 $.

Table~\ref{B-DEP} presents estimates of the mean cycle time
$ \widehat{\gamma} $ obtained via simulation after performing 100000
service cycles, together with the corresponding lower and upper bounds
calculated from (\ref{e-15}).
\begin{table}[hhh]
\begin{center}
\begin{tabular}{||c|c|c|c||}
\hline\hline
\strut & & & \\
$\; a \;$ &
$\|\mathbb{E}[{\cal T}_{1}]\|$ &
$\widehat{\gamma}$ &
$\mathbb{E}\|{\cal T}_{1}\|$ \\
\strut & & & \\
\hline
1   & 1.0 & 1.005718 & 2.283333 \\
1/2 & 1.0 & 1.002080 & 1.481250 \\
1/3 & 1.0 & 1.000871 & 1.213889 \\
1/4 & 1.0 & 1.000279 & 1.080208 \\
1/5 & 1.0 & 1.000000 & 1.000000 \\
\hline\hline
\end{tabular}
\caption{Numerical results for a network with dependent service times.}
\label{B-DEP}
\end{center}
\end{table}

Let us now consider the network in Fig.~\ref{F-AFJ} under the assumption that
the service times $ \tau_{i1} $ are independent exponentially distributed
random variables. We suppose that $ \mathbb{E}[\tau_{i1}]=1 $ for all
$ i $ except for one, say $ i=4 $, with $ \mathbb{E}[\tau_{41}] $
essentially greater than $ 1 $. One can see that the difference between the
upper and lower bounds will decrease as the value of
$ \mathbb{E}[\tau_{41}] $ increases. Table~\ref{B-DOM} shows how the
bounds vary with different values of $ \mathbb{E}[\tau_{41}] $.
\begin{table}[hhh]
\begin{center}
\begin{tabular}{||c|c|c|c||}
\hline\hline
\strut & & & \\
$\mathbb{E}[\tau_{41}]$ &
$\|\mathbb{E}[{\cal T}_{1}]\|$ &
$\widehat{\gamma}$ &
$\mathbb{E}\|{\cal T}_{1}\|$ \\
\strut & & & \\
\hline
 1.0 &  1.0 &  1.005718 &  2.283333 \\
 2.0 &  2.0 &  2.004857 &  2.896032 \\
 3.0 &  3.0 &  3.004242 &  3.685531 \\
 4.0 &  4.0 &  4.003627 &  4.554525 \\
 5.0 &  5.0 &  5.003013 &  5.465368 \\
 6.0 &  6.0 &  6.002398 &  6.400835 \\
 7.0 &  7.0 &  7.001783 &  7.351985 \\
 8.0 &  8.0 &  8.001168 &  8.313731 \\
 9.0 &  9.0 &  9.000553 &  9.282968 \\
10.0 & 10.0 & 10.000008 & 10.257692 \\
\hline\hline
\end{tabular}
\caption{Results for a network with a dominating service time.}\label{B-DOM}
\end{center}
\end{table}

Let us discuss the effect of decreasing the variance
$ \mathbb{D}[\tau_{i1}] $ on the bounds on $ \gamma $. Note that if
$ \tau_{i1} $ were degenerate random variables with zero variance, the
lower and upper bounds in (\ref{e-15}) would coincide. One can therefore
expect that with decreasing the variance of $ \tau_{i1} $, the accuracy of
the bounds increases.

As an illustration, consider a tandem queueing system (see Fig.~\ref{F-OTQ})
with $ n=5 $ nodes. Suppose that $ \tau_{i1}=\xi_{i1}/r $, where
$ \xi_{i1} $, $ i=1,\ldots,5 $, are i.i.d. random variables which have the
Erlang distribution with the probability density function
$$
f_{r}(t)
=\left\{
   \begin{array}{ll}
     t^{r-1}e^{-t}/(r-1)!, & \mbox{if $ t>0$}, \\
     0,                    & \mbox{if $ t\leq 0$}.
   \end{array}
 \right.
$$
Clearly, $ \mathbb{E}[\tau_{i1}]=1 $ and
$ \mathbb{D}[\tau_{i1}]=1/r $. Related numerical results including
estimates $ \widehat{\gamma} $ evaluated by simulating 100000 cycles are
shown in Table~\ref{B-CHA}.
\begin{table}[hhh]
\begin{center}
\begin{tabular}{||c|c|c|c||}
\hline\hline
\strut & & & \\
$\; r \;$ &
$\|\mathbb{E}[{\cal T}_{1}]\|$ &
$\widehat{\gamma}$ &
$\mathbb{E}\|{\cal T}_{1}\|$ \\
\strut & & & \\
\hline
 1 & 1.0 & 1.042476 & 2.928968 \\
 2 & 1.0 & 1.026260 & 2.311479 \\
 3 & 1.0 & 1.019503 & 2.045538 \\
 4 & 1.0 & 1.015637 & 1.890824 \\
 5 & 1.0 & 1.013110 & 1.787242 \\
 6 & 1.0 & 1.010864 & 1.711943 \\
 7 & 1.0 & 1.009920 & 1.654154 \\
 8 & 1.0 & 1.008409 & 1.608064 \\
 9 & 1.0 & 1.007726 & 1.570232 \\
10 & 1.0 & 1.006657 & 1.538479 \\
\hline\hline
\end{tabular}
\caption{Results for tandem queues at changing variance.}\label{B-CHA}
\end{center}
\end{table}

\bibliographystyle{utphys}

\bibliography{Algebraic_modelling_and_performance_evaluation_of_acyclic_fork-join_queueing_networks}

\end{document}